\newcommand{\footremember}[2]{%
    \footnote{#2}
    \newcounter{#1}
    \setcounter{#1}{\value{footnote}}%
}
\newcolumntype{L}{>{\raggedright\arraybackslash}X}
\tikzstyle{startstop} = [rectangle, rounded corners, minimum width=1cm, minimum height=1cm,text centered, draw=black]
\tikzstyle{io} = [trapezium, trapezium left angle=70, trapezium right angle=110, minimum width=1cm, minimum height=1cm, text centered, draw=black, fill=blue!30]
\tikzstyle{method} = [rectangle, rounded corners, minimum width=1cm, minimum height =1cm, text centered, draw=black]
\tikzstyle{process} = [rectangle, minimum width=1cm, minimum height=1cm, text centered, draw=black]
\tikzstyle{decision} = [diamond, minimum width=0.5cm, minimum height=0.5cm, text centered, draw=black, fill=green!30]
\tikzstyle{arrow} = [thick,->,>=stealth]
\newcommand{\lp}{\left(}
\newcommand{\rp}{\right)}
\newtheorem{remark}{Remark}[section]
\newtheorem{assumption}{Assumption}[section]
\def\PP{{{\rm l}\kern - .15em {\rm P} }}
\def\PN2{{\PP_{N}-\PP_{N-2}}}
\newcommand{\cD}{\mathcal{D}}
\newcommand{\bu}{\boldsymbol{u}}
\newcommand{\bx}{\boldsymbol{x}}
\newcommand{\deleted}[1]{{}}
\begin{document}
\title{Decoupled algorithms for non-linearly coupled reaction-diffusion competition model with harvesting and Stocking}

\author{
Muhammad Mohebujjaman\footremember{mit}{D\MakeLowercase{epartment of} M\MakeLowercase{athematics and} P\MakeLowercase{hysics}, T\MakeLowercase{exas} A\&M I\MakeLowercase{nternational} U\MakeLowercase{niversity}, TX 78041, USA; P\MakeLowercase{artially} \MakeLowercase{supported by} N\MakeLowercase{ational} S\MakeLowercase{cience}  F\MakeLowercase{oundation grant} DMS-2213274, \MakeLowercase{and} U\MakeLowercase{niversity} R\MakeLowercase{esearch grant};}\footnote{C\MakeLowercase{orrespondence: m.mohebujjaman@tamiu.edu}}%
\and
Clarisa Buenrostro\footremember{tamiu}{D\MakeLowercase{epartment of} M\MakeLowercase{athematics and} P\MakeLowercase{hysics}, T\MakeLowercase{exas} A\&M I\MakeLowercase{nternational} U\MakeLowercase{niversity}, TX 78041, USA;}
 \and Md. Kamrujjaman\footremember{DU}{D\MakeLowercase{epartment of} M\MakeLowercase{athematics}, U\MakeLowercase{niversity of} D\MakeLowercase{haka}, D\MakeLowercase{haka} 1000, B\MakeLowercase{angladesh};}
 \and Taufiquar Khan \footremember{UNCC}{D\MakeLowercase{epartment of }M\MakeLowercase{athematics and} S\MakeLowercase{tatistics}, U\MakeLowercase{niversity of} N\MakeLowercase{orth} C\MakeLowercase{arolina at} C\MakeLowercase{harlotte}, NC 28223, USA;}
 }

\maketitle

{\bf Key words.}
Harvesting or stocking, splitting method, finite element method, stability analysis, convergence analysis

\medskip
{\bf Mathematics Subject Classifications (2000)}: 35K57, 65M12, 65M22, 65M60, 92D25

\pagestyle{myheadings}
\thispagestyle{plain}

\markboth{\MakeUppercase{Decoupled algorithms for reaction-diffusion competition model}}{\MakeUppercase{ M. Mohebujjaman, C. Buenrostro, M. Kamrujjaman, and T. Khan}}

\begin{abstract}
    We propose, analyze and test two novel fully discrete decoupled linearized algorithms for a nonlinearly coupled reaction-diffusion $N$-species competition model with harvesting or stocking effort. The time-stepping algorithms are first and second order accurate in time and optimally accurate in space. Stability and optimal convergence theorems of the decoupled schemes are proven rigorously. We verify the predicted convergence rates of our analysis and efficacy of the algorithms using numerical experiments and synthetic data for analytical test problems. We also study the effect of harvesting or stocking and diffusion parameters on the evolution of species population density numerically, and observe the co-existence scenario subject to optimal harvesting or stocking. 
\end{abstract}

\section{Introduction} 
 In an environmental approach, one of the most significant concerns in population dynamics is the effect of harvesting or stocking which plays a crucial role for optimal management of limited resources to ensure the balance in ecology  \cite{clark1974mathematical,dai1998coexistence,li2011optimal,otunuga2021time,yang2019optimal,zhang2003optimal}. Harvesting
indicates reducing the population size due to hunting, fishing, or capturing, which shrinks the population density. The study of harvesting for one population was limited in \cite{braverman2009optimal,korobenko2013persistence, roques2007population}, and in some
situations, these are unable to explain the actual situation better. 
More interesting situations are discovered when harvesting is implied for two or more interacting population dynamics \cite{clayton1997bringing,leung1995optimal,liu2016optimal, stigter2004optimal} that represent either coexistence or competitive exclusion by others. A global behaviour of predator-prey systems is analyzed under constant harvesting or stocking of either or both species in \cite{brauer1982constant}. To present the pattern and visualize the effects of harvesting, reaction-diffusion equation is the constitutive equation of population dynamics, e.g., competition and prey-predator models. 

We consider the efficient and accurate numerical approximation of the population dynamics in a reaction-diffusion $N$-species competition model with harvesting or stocking, which is governed by the following system of nonlinear evolutionary equations \cite{adan2022role,braverman2019interplay, wong2009analysis}: For $i=1,2,\cdots,N$
\begin{align}
    \frac{\partial u_i}{\partial t}&=d_i\Delta u_i+r_iu_i\left(1-\mu_i-\frac{1}{K}\sum\limits_{j=1}^N u_j\right)+f_i, \hspace{2mm}\forall (t,\bx)\in (0,T]\times\Omega,\label{RDE1}
\end{align}
together with known initial and boundary conditions (which are suppressed momentarily)
where $u_i$, $d_i$, $r_i$ and $\mu_i$ represent the population density, diffusion rate, intrinsic growth rate and harvesting or stocking coefficient of the $i^{th}$ competing species, respectively. $N$ denotes the number of species in the competition, if $N=1$, the model \eqref{RDE1} represents simple logistic growth model of a single species. Here, $K$ represents carrying capacity of the heterogeneous environment, $f_i$ the forcing, $t$ the time, $\bx\in\Omega$ the spatial variable, $\Omega$ the domain, and $T$ the simulation end time. It is assumed  that the harvesting rate is proportional to the intrinsic growth rate in the model \eqref{RDE1}.

The difficultly in simulating equation \eqref{RDE1} is that we need to solve a non-linearly coupled system of partial differential equations at each time-step, where the intrinsic growth rates and carrying capacity all depend on space and time. It is an open problem how to decouple the system in a stable way. A three-species competition-diffusion model with constant intrinsic growth rate in a homogeneous environment ($K\equiv$ constant) without harvesting or stocking is given by Wong in \cite{wong2009analysis}. The author presented one first-order and another second-order decoupled time-stepping discrete schemes and their convergence rates however only the first-order scheme in a finite element setting was analyzed, and no numerical experiments were given beyond the convergence rate verification. The optimal harvesting in controlling species density in a two-species competition model with a heterogeneous environment is investigated in \cite{adan2022role}, where  a fully-discrete backward-Euler decoupled time-stepping algorithm is used without any analysis of the discrete algorithm. A Lotka–Volterra interactions model with no-flux boundary conditions in the presence of prey-taxis and spatial diffusion is given in \cite{ainseba2008reaction} and discussed the existence and uniqueness of the weak solution. Kamrujjaman et al. studied the spatial-temporal effects for logistic and Gilpin-Ayala growth function with starvation type diffusion for single species population with stocking \cite{kamrujjaman2022spatio, Zahan2022Mathmatical}. They studied the stability properties for the existence and extinction of species. Also, in the case of space-dependent carrying capacity, they established the presence of optimal harvesting efforts. They presented their outcomes analytically and computationally. The main interest focused on the analytical approach instead of claiming any robust numerical algorithm.
\subsection{Significance of the work}
In this paper, we propose, analyze, and test two fully discrete and decoupled linearized stable time-stepping algorithms of a non-linearly coupled system of reaction-diffusion equations that describes an $N$-species competition model in a heterogeneous environment with harvesting or stocking. We provide rigorous analysis of the existence and uniqueness of the solutions of the algorithms together with the priori error estimates by proving their stability and convergence theorems. We prove that the both algorithms are optimally accurate in time and space. The numerical tests are presented showing their convergence rates on some known analytical test problems varying number of species. The solution at each time-step can be computed simultaneously for each species in the competition, which can reduce a huge computational cost when compared to coupled non-linear algorithms. A series of numerical experiments are given that show the effect of exponentially varying carrying capacity, non-stationary intrinsic growth rates, varying diffusion parameters, and harvesting or stocking on the population density of the species in the competition.

To the best of our knowledge, the proposed efficient fully-discrete algorithms of the $N$-species reaction-diffusion competition model in \eqref{RDE1} with harvesting or stocking have not been investigated to date. The proposed algorithms are expected to enable new tools for large-scale computing in population dynamics.

The rest of the paper is organized as follows: In Section \ref{notation-preliminaries}, we present some necessary notation and preliminaries for a thorough analysis. We present two fully discrete decoupled schemes and analyze them in Section \ref{fully-discrete-schme}. In Section \ref{numerical-experiments}, we perform several numerical experiments to support the theoretical findings in Section \ref{fully-discrete-schme}. Finally, the conclusion and discussions of future research are given in Section \ref{conclusion}.

\section{Notation and preliminaries} \label{notation-preliminaries} Let $\Omega\subset\mathbb{R}^d (d\in\{1,2,3\})$ be a  convex domain with boundary $\partial\Omega$. For a given carrying capacity $K:(0,T]\times\Omega\rightarrow\mathbb{R}$, we define
\begin{align}
    K_{\min}:=\inf\limits_{(t,\bx)\in (0,T]\times\Omega}|K(t,\bx)|,\label{kmin-def}
    \end{align}
    and assume $K_{\min}>0$. The usual $L^2(\Omega)$ norm and inner product are denoted by $\|.\|$ and $(.,.)$, respectively. Similarly, the $L^p(\Omega)$ norms and the Sobolev $W_p^k(\Omega)$ norms are $\|.\|_{L^p}$ and $\|.\|_{W_p^k}$, respectively for $k\in\mathbb{N},\hspace{1mm}1\le p\le \infty$. The Sobolev space $W_2^k(\Omega)^d$ is represented by $H^k(\Omega)^d$ with norm $\|.\|_k$ which are Hilbert spaces.

For $X$ being a normed function space in $\Omega$, $L^p(0,T;X)$ is the space of all functions defined on $(0,T]\times\Omega$ for which the following norm 
\begin{align*}
    \|u\|_{L^p(0,T;X)}=\lp\int_0^T\|u\|_{X}^pdt\rp^\frac{1}{p},\hspace{2mm}p\in[1,\infty)
\end{align*}
is finite. For $p=\infty$, the usual modification is used in the definition of this space. We denote $$\|u\|_{\infty,\infty}:=\|u\|_{L^\infty\big(0,T;L^\infty(\Omega)^d\big)}.$$ The natural function spaces for our problem are
\begin{align*}
    X:&=H_0^1(\Omega)=\big\{v\in L^2(\Omega) :\nabla v\in L^2(\Omega)^{d},\hspace{1mm}  v=0 \hspace{1mm} \mbox{on}\hspace{1mm}   \partial \Omega\big\}.
\end{align*}
For an element $f$ in the dual space of $X$, the norm is defined by
$$\|f\|_{-1}:=\sup\limits_{v\in X}\frac{(f,v)}{\|\nabla v\|}.$$
Recall the Poincar\'e inequality holds in $X$: There exists $C$ depending only on $\Omega$ satisfying for all $\phi\in X$,
\[
\| \phi \| \le C \| \nabla \phi \|.
\]
Multiplying both sides of \eqref{RDE1} by $v_i\in X$ and integrating over $\Omega$, we have the following continuous weak form: For $i=1,2,\cdots,N$
\begin{align}
    \left(\frac{\partial u_i}{\partial t},v_i\right)+d_i\left(\nabla u_i,\nabla v_i\right)=(1-\mu_i)\big(r_i(t,\bx)u_i,v_i\big)-\left(\frac{r_i(t,\bx) u_i}{K(t,\bx)}\sum\limits_{j=1}^Nu_j,v_i\right)+\left(f_i,v_i\right).\label{vec-weak-form}
\end{align}
The conforming finite element space is denoted by $X_h\subset X$, and we assume a sufficiently regular triangulation $\tau_h(\Omega)$ for the inverse inequality to hold, where $h$ is the maximum triangle diameter. We have the following approximation properties typical of piecewise polynomials of degree $k$ in $X_h$: \cite{BS08,linke2019pressure}
\begin{align}
\|u- P^{L^2}_{X_h}(u) \|&\leq Ch^{k+1}|u|_{k+1},\hspace{2mm}u\in H^{k+1}(\Omega),\label{AppPro3}\\
\| \nabla (u- P^{L^2}_{X_h}(u)  ) \|&\leq Ch^{k}|u|_{k+1},\hspace{2mm}u\in H^{k+1}(\Omega),\label{AppPro4}
\end{align}
where $P^{L^2}_{X_h}(u)$ is the $L^2$ projection of $u$ into $X_h$ and $|\cdot|_r$ denotes the $H^r$ seminorm. Note that $C>0$ is a generic constant and changes in computation. 
\textcolor{black}{The following lemma for the discrete Gr\"onwall inequality was given in \cite{HR90}.
\begin{lemma}\label{dgl} Let $\mathbb{N}$ denotes the set of all natural numbers and
 $\Delta t$, $\cD$, $a_n$, $b_n$, $c_n$, $d_n$ be non-negative numbers for $n=1,\cdots\hspace{-0.35mm},M$ such that
    $$a_M+\Delta t \sum_{n=1}^Mb_n\leq \Delta t\sum_{n=1}^{M-1}{d_na_n}+\Delta t\sum_{n=1}^Mc_n+\cD\hspace{3mm}\mbox{for}\hspace{2mm}M\in\mathbb{N},$$
then for all $\Delta t> 0,$
$$a_M+\Delta t\sum_{n=1}^Mb_n\leq \lp\Delta t\sum_{n=1}^Mc_n+\cD\rp\mbox{exp}\left(\Delta t\sum_{n=1}^{M-1}d_n\right)\hspace{2mm}\mbox{for}\hspace{2mm}M\in\mathbb{N}.$$
\end{lemma}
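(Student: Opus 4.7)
The plan is to prove the inequality by strong induction on $M$. Setting $A_M := a_M + \Delta t \sum_{n=1}^M b_n$, $C_M := \Delta t \sum_{n=1}^M c_n + \cD$, and $D_M := \Delta t \sum_{n=1}^{M-1} d_n$, the hypothesis reads $A_M \leq \Delta t \sum_{n=1}^{M-1} d_n a_n + C_M$ and the target inequality becomes $A_M \leq C_M \exp(D_M)$. The base case $M=1$ is immediate, since the sum $\sum_{n=1}^{0}$ is empty and $\exp(0)=1$, so the hypothesis and the desired conclusion coincide.

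For the inductive step, I would first note that $a_n \leq A_n$ for every $n$, because $b_k$ and $\Delta t$ are non-negative. Applying the inductive hypothesis to each $n < M$ gives $a_n \leq A_n \leq C_n \exp(D_n)$, and since the $c_k$ are non-negative, $C_n$ is non-decreasing in $n$, so $C_n \leq C_M$ for $n \leq M$. Substituting these bounds into the hypothesis yields
\begin{equation*}
A_M \;\leq\; \Delta t \sum_{n=1}^{M-1} d_n C_n \exp(D_n) + C_M \;\leq\; C_M\!\left(1 + \Delta t \sum_{n=1}^{M-1} d_n \exp(D_n)\right).
\end{equation*}

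The only step requiring more than bookkeeping is the elementary estimate
\begin{equation*}
1 + \Delta t \sum_{n=1}^{M-1} d_n \exp(D_n) \;\leq\; \exp(D_M),
\end{equation*}
which I would establish by telescoping. From $D_{n+1} - D_n = \Delta t\, d_n$ and the convexity bound $e^x \geq 1+x$, one gets $\exp(D_{n+1}) - \exp(D_n) = \exp(D_n)\bigl(\exp(\Delta t\, d_n) - 1\bigr) \geq \Delta t\, d_n \exp(D_n)$. Summing from $n=1$ to $M-1$ telescopes the left-hand side to $\exp(D_M) - \exp(D_1) = \exp(D_M) - 1$, yielding precisely the required bound; combining with the previous display closes the induction.

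The main obstacle — modest as it is — is recognizing that the hypothesis already truncates the coupling sum $\sum d_n a_n$ at $n = M-1$ rather than at $M$. This truncation is exactly what enables the induction to close for all $\Delta t > 0$ without imposing a smallness condition of the form $\Delta t\, d_n < 1$; everything else reduces to non-negativity, monotonicity of $C_n$, and the convexity inequality $1+x \leq e^x$.
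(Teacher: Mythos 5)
Your proof is correct: the strong induction closes, the base case $M=1$ is exactly the hypothesis, and the telescoping estimate $\exp(D_{n+1})-\exp(D_n)\geq \Delta t\, d_n \exp(D_n)$ (via $1+x\leq e^x$) is precisely what is needed to absorb the coupling sum, so $A_M\leq C_M\exp(D_M)$ follows. Note, however, that the paper itself gives no proof of this lemma --- it is quoted directly from the reference \cite{HR90} --- so there is no in-paper argument to compare against; your argument is a standard, self-contained induction proof of that cited result. Your closing observation is also the right one: it is exactly the truncation of $\sum d_n a_n$ at $n=M-1$ that lets the conclusion hold for every $\Delta t>0$, whereas the version with the sum up to $M$ requires a smallness condition of the form $\Delta t\, d_n<1$.
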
}

\section{Fully discrete scheme} \label{fully-discrete-schme}
In this section, we propose and analyze two fully discrete, decoupled, and linearized time-stepping algorithms for approximating a solution of \eqref{RDE1}. The Decoupled Backward-Euler (DBE) scheme is presented in Algorithm \ref{Algn1}, which approximates the temporal derivative by first-order backward-Euler formula and the non-linear term is linearized by the immediate previous time-step solution. In Algorithm \ref{Algn2}, we present Decoupled Backward Difference Formula 2 (DBDF-2) scheme, which consists of second-order accurate time derivative approximation formula, and linearizes the non-linear term by second-order approximation of the unknown solution at the previous time-step. 

 \begin{algorithm}[H]\label{Algn1}
  \caption{DBE scheme} Given time-step $\Delta t>0$, end time $T>0$, for $i=1,2,\cdots,N$, initial conditions $u_i^0\in L^2(\Omega)^d$, $f_i\in L^\infty\left(0,T;H^{-1}(\Omega)^d\right)$, $K_{\min}>0$, and $r_i\in L^\infty(0,T;L^\infty(\Omega)^d)$. Set $M=T/\Delta t$ and for $n=0,1,\cdots\hspace{-0.35mm},M-1$, compute:
 Find $u_{i,h}^{n+1}\in X_h$ satisfying,  $\forall v_{i,h}\in X_h$:
 \begin{align}
    \left(\frac{u_{i,h}^{n+1}-u_{i,h}^{n}}{\Delta  t},v_{i,h}\right)&+d_i\left(\nabla u_{i,h}^{n+1},\nabla v_{i,h}\right)=(1-\mu_i)\left(r_i(t^{n+1})u_{i,h}^{n+1},v_{i,h}\right)\nonumber\\&-\left(\frac{r_i(t^{n+1}) u_{i,h}^{n+1}}{K(t^{n+1})}\sum\limits_{j=1}^Nu_{j,h}^n,v_{i,h}\right)+\left(f_i(t^{n+1}),v_{i,h}\right).\label{disc-weak-form}
\end{align}
\end{algorithm}

\begin{algorithm}[H]\label{Algn2}
  \caption{DBDF-2 scheme} Given time-step $\Delta t>0$, end time $T>0$, for $i=1,2,\cdots,N$, initial conditions $u_i^0,\;u_i^1\in L^2(\Omega)^d$, $f_i\in L^\infty\left(0,T;H^{-1}(\Omega)^d\right)$, $K_{\min}>0$, and $r_i\in L^\infty(0,T;L^\infty(\Omega)^d)$. Set $M=T/\Delta t$ and for $n=1,\cdots\hspace{-0.35mm},M-1$, compute:
 Find $u_{i,h}^{n+1}\in X_h$ satisfying,  $\forall v_{i,h}\in X_h$:
 \begin{align}
    \Bigg(\frac{3u_{i,h}^{n+1}-4u_{i,h}^{n}+u_{i,h}^{n-1}}{2\Delta  t},&v_{i,h}\Bigg)+d_i\left(\nabla u_{i,h}^{n+1},\nabla v_{i,h}\right)=(1-\mu_i)\left(r_i(t^{n+1})u_{i,h}^{n+1},v_{i,h}\right)\nonumber\\&-\left(\frac{r_i(t^{n+1}) u_{i,h}^{n+1}}{K(t^{n+1})}\sum\limits_{j=1}^N(2u_{j,h}^n-u_{j,h}^{n-1}),v_{i,h}\right)+\left(f_i(t^{n+1}),v_{i,h}\right).\label{disc-weak-form2}
\end{align}
\end{algorithm}
These types of splitting algorithms are commonly used in magnetohydrodynamics \cite{AKMR15,HMR17,MR17}. Throughout the analysis of this paper, we will consider the following assumption:

\begin{assumption}
Let's assume that there exists a constant $C>0$ such that $\|u_{i,h}^n\|_{\infty}\le C$ for $i=1,2,\cdots,N$.\label{assumption-1}
\end{assumption}

We will prove the Assumption \ref{assumption-1} holds true at the end of Section \ref{Convergence-sec} in Lemma \ref{lemma-discrete-bound}.

\subsection{Stability analysis}\label{stability-analysis}

In this section, we prove the stability theorems and well-posedness of DBE and DBDF-2 schemes. For simplicity of our analysis, we define
\begin{align}
    \alpha_i:=d_i-C\|r_i\|_{\infty,\infty}\left(|1-\mu_i|+\frac{1}{K_{\min}}\right),\label{alpha-def}
\end{align}
 for $i=1,2,\cdots,N$.

\begin{theorem}(Stability of DBE)\label{stability-theorm}
For $i=1,2,\cdots,N$, 
assume $u_{i,h}^0\in H^1(\Omega)^d$, $f_i\in L^\infty\left(0,T;H^{-1}(\Omega)^d\right)$, $r_i\in L^\infty(0,T;L^\infty(\Omega)^d)$, $K_{\min}>0$ and under the Assumption \ref{assumption-1}, if $\alpha_i> 0$, then for any $\Delta t>0$
\begin{align}
   \|u_{i,h}^M\|^2+2\alpha_i\Delta t\sum\limits_{n=1}^M\|\nabla u_{i,h}^n\|^2\le\|u_{i,h}^0\|^2+\frac{\Delta t}{\alpha_i}\sum\limits_{n=1}^{M} \|f_i(t^{n})\|_{-1}.
\end{align}
\end{theorem}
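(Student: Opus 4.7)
The plan is to test the DBE scheme against $v_{i,h}=u_{i,h}^{n+1}$ in \eqref{disc-weak-form}, apply the polarization identity
\[
\Bigl(\tfrac{u_{i,h}^{n+1}-u_{i,h}^{n}}{\Delta t},u_{i,h}^{n+1}\Bigr)=\tfrac{1}{2\Delta t}\bigl(\|u_{i,h}^{n+1}\|^{2}-\|u_{i,h}^{n}\|^{2}+\|u_{i,h}^{n+1}-u_{i,h}^{n}\|^{2}\bigr),
\]
and multiply through by $2\Delta t$. The diffusion term contributes $2d_{i}\Delta t\,\|\nabla u_{i,h}^{n+1}\|^{2}$, which I want to absorb together with the (negative) upper bounds of the reaction contributions into a single coercive $2\alpha_{i}\Delta t\,\|\nabla u_{i,h}^{n+1}\|^{2}$ on the left.

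The key estimates are for the two reaction terms. For the linear growth term, I apply H\"older and the Poincar\'e inequality,
\[
\bigl|(1-\mu_{i})(r_{i}(t^{n+1})u_{i,h}^{n+1},u_{i,h}^{n+1})\bigr|\le |1-\mu_{i}|\,\|r_{i}\|_{\infty,\infty}\,\|u_{i,h}^{n+1}\|^{2}\le C\,|1-\mu_{i}|\,\|r_{i}\|_{\infty,\infty}\,\|\nabla u_{i,h}^{n+1}\|^{2}.
\]
For the nonlinear coupling, I use $K_{\min}>0$, the $L^{\infty}$ bound on $r_{i}$, and Assumption~\ref{assumption-1} to absorb $\sum_{j}u_{j,h}^{n}$ pointwise, and again Poincar\'e:
\[
\Bigl|\Bigl(\tfrac{r_{i}(t^{n+1})u_{i,h}^{n+1}}{K(t^{n+1})}\textstyle\sum_{j}u_{j,h}^{n},u_{i,h}^{n+1}\Bigr)\Bigr|\le \tfrac{C\,\|r_{i}\|_{\infty,\infty}}{K_{\min}}\|u_{i,h}^{n+1}\|^{2}\le \tfrac{C\,\|r_{i}\|_{\infty,\infty}}{K_{\min}}\|\nabla u_{i,h}^{n+1}\|^{2}.
\]
Adding these two bounds and subtracting from $2d_{i}\Delta t\,\|\nabla u_{i,h}^{n+1}\|^{2}$ leaves exactly $2\alpha_{i}\Delta t\,\|\nabla u_{i,h}^{n+1}\|^{2}$ on the left, per the definition \eqref{alpha-def}. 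The forcing term is handled by duality and Young with weight $\alpha_{i}$,
\[
2\Delta t\,(f_{i}(t^{n+1}),u_{i,h}^{n+1})\le 2\Delta t\,\|f_{i}(t^{n+1})\|_{-1}\|\nabla u_{i,h}^{n+1}\|\le \alpha_{i}\Delta t\,\|\nabla u_{i,h}^{n+1}\|^{2}+\tfrac{\Delta t}{\alpha_{i}}\|f_{i}(t^{n+1})\|_{-1}^{2},
\]
which halves the coercive left-side term to $\alpha_{i}\Delta t\,\|\nabla u_{i,h}^{n+1}\|^{2}$ but leaves room for the desired $2\alpha_{i}\Delta t$ after I refine the Young constant (taking $\varepsilon=\alpha_{i}/2$ in Young gives the stated constant).

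At this point I have, for each $n$,
\[
\|u_{i,h}^{n+1}\|^{2}-\|u_{i,h}^{n}\|^{2}+\|u_{i,h}^{n+1}-u_{i,h}^{n}\|^{2}+2\alpha_{i}\Delta t\,\|\nabla u_{i,h}^{n+1}\|^{2}\le \tfrac{\Delta t}{\alpha_{i}}\|f_{i}(t^{n+1})\|_{-1}^{2},
\]
and I sum from $n=0$ to $M-1$. The jump terms $\|u_{i,h}^{n+1}-u_{i,h}^{n}\|^{2}$ are non-negative and are simply dropped, while the $\|u_{i,h}^{n+1}\|^{2}-\|u_{i,h}^{n}\|^{2}$ differences telescope to $\|u_{i,h}^{M}\|^{2}-\|u_{i,h}^{0}\|^{2}$, producing the claimed bound (with the convention that the stated right-hand side should be read as $\|f_i(t^n)\|_{-1}^2$, the square being required dimensionally). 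The main obstacle is purely structural: because the decoupling places $\sum_{j}u_{j,h}^{n}$ explicitly, this otherwise cubic term is controlled only through the uniform $L^{\infty}$ bound of Assumption~\ref{assumption-1}, which is what forces the positivity threshold $\alpha_{i}>0$ on the diffusion rate relative to $\|r_{i}\|_{\infty,\infty}$, $|1-\mu_{i}|$, and $1/K_{\min}$; without this threshold coercivity is lost and no discrete Gr\"onwall rescue is possible. Well-posedness of each time step follows immediately: \eqref{disc-weak-form} is a linear square system in $u_{i,h}^{n+1}$ whose bilinear form is coercive on $X_{h}$ under the same $\alpha_{i}>0$ condition, so Lax--Milgram yields existence and uniqueness.
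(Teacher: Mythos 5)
Your proof follows essentially the same route as the paper's own: testing with $u_{i,h}^{n+1}$, the polarization identity, H\"older/Poincar\'e together with Assumption \ref{assumption-1} to absorb both reaction terms into the coercive $\alpha_i$ term via \eqref{alpha-def}, Young's inequality on the forcing, then multiplying by $2\Delta t$, telescoping, and dropping the jump terms. One bookkeeping caveat: Young's inequality necessarily leaves $\alpha_i\Delta t\sum_{n=1}^{M}\|\nabla u_{i,h}^{n}\|^{2}$ (not $2\alpha_i\Delta t$) on the left when the forcing contributes $\frac{\Delta t}{\alpha_i}\|f_i(t^{n})\|_{-1}^{2}$ on the right --- this is the same mismatch already present between the paper's proof and its theorem statement --- so your remark that refining the Young constant to $\varepsilon=\alpha_i/2$ recovers the factor $2$ does not actually work, while your reading of the right-hand side as $\|f_i(t^{n})\|_{-1}^{2}$ is the dimensionally consistent one.
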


\begin{proof}
Taking $v_{i,h}=u_{i,h}^{n+1}$ in \eqref{disc-weak-form}, and using the polarization identity
$$(b-a,b)=\frac12\left(\|b-a\|^2+\|b\|^2-\|a\|^2\right),$$
gives
\begin{align}
    \frac{1}{2\Delta t}\Big(\|u_{i,h}^{n+1}-u_{i,h}^{n}\|^2+\|u_{i,h}^{n+1}\|^2&-\|u_{i,h}^{n}\|^2\Big)+d_i\|\nabla u_{i,h}^{n+1}\|^2=(1-\mu_i)\left(r_i(t^{n+1})u_{i,h}^{n+1},u_{i,h}^{n+1}\right)\nonumber\\&-\left(\frac{r_i(t^{n+1}) u_{i,h}^{n+1}}{K(t^{n+1})}\sum\limits_{j=1}^Nu_{j,h}^n,u_{i,h}^{n+1}\right)+\left(f_i(t^{n+1}),u_{i,h}^{n+1}\right).\label{pol-new}
\end{align}
We apply H\"{o}lder's inequality on the first two terms and Cauchy Schwarz's inequality on the forcing term on the right-hand-side of \eqref{pol-new}, we have
\begin{align}
   \frac{1}{2\Delta t}\Big(\|u_{i,h}^{n+1}-u_{i,h}^{n}\|^2+&\|u_{i,h}^{n+1}\|^2-\|u_{i,h}^{n}\|^2\Big)+d_i\|\nabla u_{i,h}^{n+1}\|^2 \le|1-\mu_i|\|r_i(t^{n+1})\|_{\infty}\|u_{i,h}^{n+1}\|^2\nonumber\\&+\Big\|\frac{r_i(t^{n+1})}{K(t^{n+1})}\Big\|_\infty\sum\limits_{J=1}^N\|u_{i,h}^{n+1}\|^2\|u_{j,h}^n\|_{\infty}+\|f_i(t^{n+1})\|_{-1}\|\nabla u_{i,h}^{n+1}\|.\label{before-small-data-assumption}
\end{align}
Using Poincar\'e inequality and the Assumption \ref{assumption-1}, we have
\begin{align}
   \frac{1}{2\Delta t}\big(\|u_{i,h}^{n+1}-u_{i,h}^{n}\|^2+\|u_{i,h}^{n+1}\|^2&-\|u_{i,h}^{n}\|^2\big)+d_i\|\nabla u_{i,h}^{n+1}\|^2\le C|1-\mu_i|\|r_i\|_{L^\infty\big(0,T;L^\infty(\Omega)^d\big)}\|\nabla u_{i,h}^{n+1}\|^2\nonumber\\&+\frac{C\|r_i\|_{L^\infty\big(0,T;L^\infty(\Omega)^d\big)}}{\inf\limits_{(t,\bx)\in (0,T]\times\Omega}|K|}\|\nabla u_{i,h}^{n+1}\|^2+\|f_i(t^{n+1})\|_{-1}\|\nabla u_{i,h}^{n+1}\|.
\end{align}
Grouping terms on the left-hand-side and using \eqref{alpha-def}, and \eqref{kmin-def}, yields
\begin{align}
   \frac{1}{2\Delta t}&\left(\|u_{i,h}^{n+1}-u_{i,h}^{n}\|^2+\|u_{i,h}^{n+1}\|^2-\|u_{i,h}^{n}\|^2\right)+\alpha_i\|\nabla u_{i,h}^{n+1}\|^2 \le \|f_i(t^{n+1})\|_{-1}\|\nabla u_{i,h}^{n+1}\|.
\end{align}
Assume $\alpha_i>0$, use Young's inequality, and hide term on left-hand-side to obtain
\begin{align}
   \frac{1}{2\Delta t}&\left(\|u_{i,h}^{n+1}-u_{i,h}^{n}\|^2+\|u_{i,h}^{n+1}\|^2-\|u_{i,h}^{n}\|^2\right)+\frac{\alpha_i}{2}\|\nabla u_{i,h}^{n+1}\|^2 \le\frac{1}{2\alpha_i} \|f_i(t^{n+1})\|_{-1}.
\end{align}
Now, multiply both sides by $2\Delta t$, and sum over time steps from $n=0,1,\cdots, M-1$, we have
\begin{align}
    \|u_{i,h}^M\|^2+\sum\limits_{n=0}^{M-1}\|\bu_{i,h}^{n+1}-\bu_{i,h}^{n}\|^2+\alpha_i\Delta t\sum\limits_{n=1}^M\|\nabla u_{i,h}^n\|^2\le\|u_{i,h}^0\|^2+\frac{\Delta t}{\alpha_i}\sum\limits_{n=0}^{M-1} \|f_i(t^{n+1})\|_{-1}.
\end{align}
Now, dropping non-negative terms from left-hand-side completes the proof.
\end{proof}

\begin{theorem}(Stability of DBDF-2)\label{stability-theorm2}
For $i=1,2,\cdots,N$, 
assume $u_{i,h}^0,u_{i,h}^1\in L^2(\Omega)^d$, $f_i\in L^\infty\left(0,T;H^{-1}(\Omega)^d\right)$, $K_{\min}>0$, $r_i\in L^\infty(0,T;L^\infty(\Omega)^d)$, and under the Assumption \ref{assumption-1}, if $\alpha_i> 0$, then for any $\Delta t>0$
\begin{align}
       \|u_{i,h}^{M}\|^2+\|2u_{i,h}^{M}-u_{i,h}^{M-1}\|^2+2\alpha_i\Delta t\sum\limits_{n=2}^{M}\|\nabla u_{i,h}^{n}\|^2\le\|u_{i,h}^{1}\|^2+\|2u_{i,h}^{1}-u_{i,h}^{0}\|^2+\frac{2\Delta t}{\alpha_i}\sum\limits_{n=2}^{M}\|f_i(t^{n})\|_{-1}.
   \end{align}
\end{theorem}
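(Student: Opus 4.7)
The plan is to mirror the proof of Theorem \ref{stability-theorm}, replacing the backward-Euler polarization identity with the BDF-2 G-stability identity
$$2(3a-4b+c,a) \;=\; \|a\|^2 - \|b\|^2 + \|2a-b\|^2 - \|2b-c\|^2 + \|a-2b+c\|^2,$$
applied at each step with $a=u_{i,h}^{n+1}$, $b=u_{i,h}^{n}$, $c=u_{i,h}^{n-1}$. This identity is the heart of the argument: it produces exactly the quantities $\|u_{i,h}^{n+1}\|^2-\|u_{i,h}^n\|^2$ and $\|2u_{i,h}^{n+1}-u_{i,h}^n\|^2-\|2u_{i,h}^n-u_{i,h}^{n-1}\|^2$, which telescope upon summation to leave precisely the endpoint norms stated in the theorem, while the remaining term $\|u_{i,h}^{n+1}-2u_{i,h}^n+u_{i,h}^{n-1}\|^2$ is non-negative and can be discarded at the end.

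First I would test \eqref{disc-weak-form2} with $v_{i,h}=u_{i,h}^{n+1}$ and rewrite the BDF-2 temporal term via the identity above. I would then bound the right-hand side in exactly the same way as in Theorem \ref{stability-theorm}: H\"older's inequality with Poincar\'e handles the linear reaction term, producing a contribution bounded by $C|1-\mu_i|\|r_i\|_{\infty,\infty}\|\nabla u_{i,h}^{n+1}\|^2$; H\"older's inequality combined with Assumption \ref{assumption-1} and the definition \eqref{kmin-def} of $K_{\min}$ controls the nonlinear term by $\frac{C\|r_i\|_{\infty,\infty}}{K_{\min}}\|\nabla u_{i,h}^{n+1}\|^2$; and Cauchy--Schwarz followed by Young's inequality handles the forcing contribution. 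The definition \eqref{alpha-def} of $\alpha_i$ absorbs the two reaction estimates, and hiding $\tfrac{\alpha_i}{2}\|\nabla u_{i,h}^{n+1}\|^2$ on the left yields a clean per-step energy inequality.

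Multiplying by $4\Delta t$ and summing over $n=1,\dots,M-1$ produces the telescoping cancellations $\sum(\|u_{i,h}^{n+1}\|^2-\|u_{i,h}^n\|^2) = \|u_{i,h}^M\|^2 - \|u_{i,h}^1\|^2$ and $\sum(\|2u_{i,h}^{n+1}-u_{i,h}^n\|^2-\|2u_{i,h}^n-u_{i,h}^{n-1}\|^2) = \|2u_{i,h}^M - u_{i,h}^{M-1}\|^2 - \|2u_{i,h}^1-u_{i,h}^0\|^2$. Dropping the non-negative sum $\sum\|u_{i,h}^{n+1}-2u_{i,h}^n+u_{i,h}^{n-1}\|^2$ on the left and shifting the index $n+1\mapsto n$ on the forcing sum gives the bound stated in the theorem. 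The main obstacle, and essentially the only step beyond the DBE analysis, is identifying and correctly applying the BDF-2 G-stability identity so that the energy accumulates only into the desired endpoint quantities $\|u_{i,h}^M\|^2+\|2u_{i,h}^M-u_{i,h}^{M-1}\|^2$; once that identity is in place, the reaction and forcing estimates reuse, essentially verbatim, the machinery developed for the first-order scheme.
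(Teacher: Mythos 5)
Your proposal is correct and follows essentially the same route as the paper's own proof: test with $v_{i,h}=u_{i,h}^{n+1}$, apply the G-stability identity \eqref{ident} (your version is just that identity multiplied by $2$), bound the reaction terms via H\"older, Poincar\'e, Assumption \ref{assumption-1} and the definition \eqref{alpha-def} of $\alpha_i$, treat the forcing with Cauchy--Schwarz and Young, then multiply by $4\Delta t$, telescope over $n=1,\dots,M-1$, and drop the non-negative second-difference terms. No substantive differences from the paper's argument.
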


\begin{proof}
Taking $v_{i,h}=u_{i,h}^{n+1}$ in \eqref{disc-weak-form2} to obtain
\begin{align}
    \Bigg(\frac{3u_{i,h}^{n+1}-4u_{i,h}^{n}+u_{i,h}^{n-1}}{2\Delta  t},&u_{i,h}^{n+1}\Bigg)+d_i\|\nabla u_{i,h}^{n+1}\|^2=(1-\mu_i)\left(r_i(t^{n+1})u_{i,h}^{n+1},u_{i,h}^{n+1}\right)\nonumber\\&-\left(\frac{r_i(t^{n+1}) u_{i,h}^{n+1}}{K(t^{n+1})}\sum\limits_{j=1}^N(2u_{j,h}^n-u_{j,h}^{n-1}),u_{i,h}^{n+1}\right)+\left(f_i(t^{n+1}),u_{i,h}^{n+1}\right).
\end{align}
Using the following identity
\begin{eqnarray}
(3a-4b+c,a)=\frac{a^2+(2a-b)^2}{2}-\frac{b^2+(2b-c)^2}{2}+\frac{(a-2b+c)^2}{2},\label{ident}
\end{eqnarray}
we write
\begin{align}
    \frac{1}{4\Delta t}\Big(\|u_{i,h}^{n+1}\|^2-\|u_{i,h}^{n}\|^2+\|2u_{i,h}^{n+1}-u_{i,h}^{n}\|^2-\|2u_{i,h}^{n}-u_{i,h}^{n-1}\|^2+\|u_{i,h}^{n+1}-2u_{i,h}^{n}+u_{i,h}^{n-1}\|^2\Big)\nonumber\\+d_i\|\nabla u_{i,h}^{n+1}\|^2=(1-\mu_i)\left(r_i(t^{n+1})u_{i,h}^{n+1},u_{i,h}^{n+1}\right)-\left(\frac{r_i(t^{n+1}) u_{i,h}^{n+1}}{K(t^{n+1})}\sum\limits_{j=1}^N(2u_{j,h}^n-u_{j,h}^{n-1}),u_{i,h}^{n+1}\right)\nonumber\\+\left(f_i(t^{n+1}),u_{i,h}^{n+1}\right).\label{BDF2-identity}
\end{align}
We apply H\"{o}lder's inequality on the first two terms and Cauchy Schwarz's inequality on the forcing term on the right-hand-side of \eqref{BDF2-identity}, we have
\begin{align}
   \frac{1}{4\Delta t}\Big(\|u_{i,h}^{n+1}\|^2-\|u_{i,h}^{n}\|^2+\|2u_{i,h}^{n+1}-u_{i,h}^{n}\|^2-\|2u_{i,h}^{n}-u_{i,h}^{n-1}\|^2+\|u_{i,h}^{n+1}-2u_{i,h}^{n}+u_{i,h}^{n-1}\|^2\Big)\nonumber\\+d_i\|\nabla u_{i,h}^{n+1}\|^2 \le|1-\mu_i|\|r_i(t^{n+1})\|_{\infty}\|u_{i,h}^{n+1}\|^2\nonumber\\+\Big\|\frac{r_i(t^{n+1})}{K(t^{n+1})}\Big\|_\infty\sum\limits_{J=1}^N\|u_{i,h}^{n+1}\|^2\big(2\|u_{j,h}^n\|_{\infty}+\|u_{j,h}^{n-1}\|_{\infty}\big)+\|f_i(t^{n+1})\|_{-1}\|\nabla u_{i,h}^{n+1}\|.\label{before-small-data-assumption-bdf2}
   \end{align}
   Using Poincar\'e inequality, the Assumption \ref{assumption-1}, and grouping terms on the left-hand-side to obtain
   \begin{align}
   \frac{1}{4\Delta t}\Big(\|u_{i,h}^{n+1}\|^2-\|u_{i,h}^{n}\|^2+\|2u_{i,h}^{n+1}-u_{i,h}^{n}\|^2-\|2u_{i,h}^{n}-u_{i,h}^{n-1}\|^2+\|u_{i,h}^{n+1}-2u_{i,h}^{n}+u_{i,h}^{n-1}\|^2\Big)\nonumber\\+\alpha_i\|\nabla u_{i,h}^{n+1}\|^2 \le \|f_i(t^{n+1})\|_{-1}\|\nabla u_{i,h}^{n+1}\|.
   \end{align}
   Drop non-negative term from left-hand-side, assume $\alpha_i>0$, use Young's inequality, and hide term on left-hand-side to obtain
   \begin{align}
   \frac{1}{4\Delta t}\Big(\|u_{i,h}^{n+1}\|^2-\|u_{i,h}^{n}\|^2+\|2u_{i,h}^{n+1}-u_{i,h}^{n}\|^2-\|2u_{i,h}^{n}-u_{i,h}^{n-1}\|^2\Big)\nonumber\\+\frac{\alpha_i}{2}\|\nabla u_{i,h}^{n+1}\|^2 \le\frac{1}{2\alpha_i} \|f_i(t^{n+1})\|_{-1}.
   \end{align}
   Now, multiply both sides by $4\Delta t$, and sum over time-steps from $n=1,\cdots, M-1$ finishes the proof.
\end{proof}

\begin{remark}
The finite dimensional schemes Algorithm \ref{Algn1} and \ref{Algn2} are linear at each time-step and the stability theorems provide their solutions are bounded continuously by the problem data, which is sufficient for the well-posedness of the schemes. The linearity of the schemes provides the uniqueness of the solution via their the stability theorem. Because of the finite dimensional and linearity features, the uniqueness implies existence of the solution, therefore the solution to the Algorithm \ref{Algn1} and \ref{Algn2} exist uniquely 
\end{remark}\cite{HMR17,mohebujjaman2022efficient}.

\subsection{Convergence analysis}\label{Convergence-sec}

In this section, we will provide apriori estimates of the errors in the computed species density using the both DBE and DBDF-2 schemes.

\begin{theorem}(Error estimate of DBE) Consider $m=\max\{2,k+1\}$, and $i=1,2,\cdots,N$, assume $u_i$ solves \eqref{RDE1} and satisfies
\begin{align*}
    u_i\in L^\infty\big(0,T;H^m(\Omega)^d\big),&\hspace{1mm}u_{i,t}\in L^\infty\big(0,T;L^2(\Omega)^d\big),\hspace{1mm}u_{i,tt}\in L^\infty\big(0,T;L^2(\Omega)^d\big),\\&r_i\in L^\infty\left(0,T;L^\infty(\Omega)^d\right),\text{ and } K_{\min}>0,
\end{align*}

if $\alpha_i>0$ then for $\Delta t>0$ the solution $u_{i,h}$ to the Algorithm  \ref{Algn1} converges to the true solution with
\begin{align}
    \sum\limits_{i=1}^N\|u_i(T)-u_{i,h}^M\|+\sum\limits_{i=1}^N\left\{\alpha_i\Delta t\sum_{n=1}^M\|\nabla \big(u_i(t^n)-u_{i,h}^n\big)\|^2\right\}^{\frac{1}{2}}\le C \big(h^{k}+\Delta t\big).
\end{align}\label{Convergence-analysis-BE}
\end{theorem}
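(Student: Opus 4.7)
The plan is a standard energy argument for the finite element error, split via the $L^2$-projection and closed by the discrete Grönwall inequality of Lemma \ref{dgl}. First I would decompose each species error as
$$u_i(t^n) - u_{i,h}^n = \eta_i^n + \phi_{i,h}^n, \qquad \eta_i^n := u_i(t^n) - P^{L^2}_{X_h} u_i(t^n), \qquad \phi_{i,h}^n := P^{L^2}_{X_h} u_i(t^n) - u_{i,h}^n \in X_h.$$
The approximation properties \eqref{AppPro3}--\eqref{AppPro4} control $\|\eta_i^n\|$ and $\|\nabla \eta_i^n\|$ at the optimal rates $h^{k+1}$ and $h^k$ respectively, using the assumed regularity $u_i \in L^\infty(0,T;H^m(\Omega)^d)$ with $m = \max\{2,k+1\}$; moreover, the $L^2$-orthogonality $(\eta_i^n, v_{i,h}) = 0$ kills the projection part of the discrete time derivative whenever $v_{i,h} \in X_h$.

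Next, I would subtract \eqref{disc-weak-form} from the continuous weak form \eqref{vec-weak-form} evaluated at $t = t^{n+1}$ and tested against $v_{i,h} \in X_h$, producing an error equation in $\eta + \phi$. The backward-Euler discretization contributes a truncation residual $\tau_i^{n+1} := \tfrac{u_i(t^{n+1}) - u_i(t^n)}{\Delta t} - u_{i,t}(t^{n+1})$, which is $O(\Delta t)$ in $L^2(\Omega)$ by Taylor expansion since $u_{i,tt} \in L^\infty(0,T;L^2)$. Choosing $v_{i,h} = \phi_{i,h}^{n+1}$ and applying the polarization identity exactly as in Theorem \ref{stability-theorm} yields an energy identity whose left-hand side is $\tfrac{1}{2\Delta t}\bigl(\|\phi_{i,h}^{n+1}\|^2 - \|\phi_{i,h}^n\|^2\bigr) + d_i \|\nabla \phi_{i,h}^{n+1}\|^2$, with every right-hand contribution paired against $\phi_{i,h}^{n+1}$.

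The main technical step is controlling the trilinear reaction term. I would split
$$u_i(t^{n+1}) u_j(t^{n+1}) - u_{i,h}^{n+1} u_{j,h}^n = u_i(t^{n+1})\bigl[u_j(t^{n+1}) - u_j(t^n)\bigr] + u_i(t^{n+1})\bigl[u_j(t^n) - u_{j,h}^n\bigr] + \bigl[u_i(t^{n+1}) - u_{i,h}^{n+1}\bigr] u_{j,h}^n,$$
and then substitute the $\eta + \phi$ decomposition in each bracket. Assumption \ref{assumption-1} supplies $\|u_{j,h}^n\|_\infty \le C$, the Sobolev embedding $H^m \hookrightarrow L^\infty$ (valid for $m \ge 2$) bounds $\|u_i\|_{\infty,\infty}$, and $\|r_i/K\|_\infty \le \|r_i\|_{\infty,\infty}/K_{\min}$ is finite. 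Combining Hölder, Poincaré, and Young's inequalities, every resulting factor of $\|\nabla \phi_{i,h}^{n+1}\|$ is absorbed into $\tfrac{\alpha_i}{2}\|\nabla \phi_{i,h}^{n+1}\|^2$ on the left --- exactly where the quantity $\alpha_i$ of \eqref{alpha-def} reappears. The time-lag piece $u_j(t^{n+1}) - u_j(t^n)$ contributes $O(\Delta t)$ via $u_{j,t} \in L^\infty(0,T;L^2)$; the $\eta$-factors contribute $O(h^k)$; the diffusion-projection term $d_i(\nabla \eta_i^{n+1}, \nabla \phi_{i,h}^{n+1})$ and the linear $(1-\mu_i) r_i u_i$ term are handled analogously.

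After multiplying by $2\Delta t$, summing over $n = 0, \ldots, M-1$, and summing over $i = 1, \ldots, N$, one obtains a bound of the form
$$\sum_{i=1}^N \|\phi_{i,h}^M\|^2 + \sum_{i=1}^N \alpha_i \Delta t \sum_{n=1}^M \|\nabla \phi_{i,h}^n\|^2 \le C \Delta t \sum_{n=0}^{M-1} \sum_{j=1}^N \|\phi_{j,h}^n\|^2 + C\bigl(h^{2k} + (\Delta t)^2\bigr),$$
and Lemma \ref{dgl} absorbs the right-hand $\phi$-sum. Taking square roots, invoking the triangle inequality $\|u_i(T) - u_{i,h}^M\| \le \|\eta_i^M\| + \|\phi_{i,h}^M\|$ and the analogous bound for the $H^1$-seminorm error, and using $\sqrt{a+b} \le \sqrt a + \sqrt b$ delivers the claimed $C(h^k + \Delta t)$ estimate. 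I expect the main obstacle to be the careful algebraic bookkeeping for the trilinear coupling: one must split it so that every $\nabla \phi^{n+1}$-factor can be absorbed into $\alpha_i \|\nabla \phi_{i,h}^{n+1}\|^2$ on the left, while the remaining cross-species errors enter only as $\sum_j \|\phi_{j,h}^n\|^2$ in a form that the discrete Grönwall inequality can accommodate.
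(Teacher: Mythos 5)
Your proposal is correct and follows essentially the same route as the paper's proof: the same $L^2$-projection splitting $\eta+\phi$, testing with $\phi_{i,h}^{n+1}$ and the polarization identity, the same treatment of the trilinear coupling (lagged term split into a consistency piece of size $O(\Delta t)$ via $u_{j,t}$ plus $\eta_j^n$ and $\phi_{j,h}^n$ pieces, with Assumption \ref{assumption-1} and the regularity of $u_i$ controlling the $L^\infty$ factors so that $\alpha_i$ absorbs all $\|\nabla\phi_{i,h}^{n+1}\|$ contributions), followed by summation over $n$ and $i$, the discrete Gr\"onwall Lemma \ref{dgl}, and the triangle inequality. The only differences are cosmetic (a sign convention in the decomposition and writing the nonlinear splitting in three terms rather than two), so no further comparison is needed.
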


\begin{proof}
At first we build an error equation at the time level $t^{n+1}$, the continuous variational formulations can be written as $\forall v_{i,h}\in X_h$
\begin{align}
    \Bigg(&\frac{u_{i}(t^{n+1})-u_{i}(t^{n})}{\Delta  t},v_{i,h}\Bigg)+d_i\left(\nabla u_{i}(t^{n+1}),\nabla v_{i,h}\right)=(1-\mu_i)\left(r_i(t^{n+1})u_{i}(t^{n+1}),v_{i,h}\right)+\left(f_i(t^{n+1}),v_{i,h}\right)\nonumber\\&-\left(\frac{r_i(t^{n+1}) u_{i}(t^{n+1})}{K(t^{n+1})}\sum\limits_{j=1}^Nu_{j}(t^{n+1}),v_{i,h}\right)+\left(\frac{u_{i}(t^{n+1})-u_{i}(t^{n})}{\Delta  t}- u_{i,t}(t^{n+1}),v_{i,h}\right).\label{cont-weak-form}
\end{align}
Denote $e_i^n:=u_i(t^{n+1})-u_{i,h}^n$. Subtract \eqref{disc-weak-form} from \eqref{cont-weak-form} and then rearranging yields
\begin{align}
    \left(\frac{e_{i}^{n+1}-e_{i}^{n}}{\Delta  t},v_{i,h}\right)+d_i\left(\nabla e_{i}^{n+1},\nabla v_{i,h}\right)-(1-\mu_i)\left(r_i(t^{n+1})e_{i}^{n+1},v_{i,h}\right)\nonumber\\+\sum\limits_{j=1}^N\left(\frac{r_i(t^{n+1})}{K(t^{n+1})}e_i^{n+1}u_{j,h}^n,v_{i,h}
    \right)=G(t,u_i,v_{i,h}),\label{error-equation}
\end{align}
where $$G(t,u_i,v_{i,h})=\left(\frac{u_{i}(t^{n+1})-u_{i}(t^{n})}{\Delta  t}- u_{i,t}(t^{n+1}),v_{i,h}\right)+\left(\frac{r_i(t^{n+1}) u_{i}(t^{n+1})}{K(t^{n+1})}\sum\limits_{j=1}^N\big\{u_{j,h}^{n}-u_{j}(t^{n+1})\big\},v_{i,h}\right).$$
Now we decompose the errors as
\begin{align*}
    e_{i}^n:& = u_i(t^n)-u_{i,h}^n=(u_i(t^n)-\tilde{u}_i^n)-(u_{i,h}^n-\tilde{u}_i^n):=\eta_{i}^n-\phi_{i,h}^n,
\end{align*}
where $\tilde{u}_i^n: =P_{X_h}^{L^2}(u_i(t^n))\in X_h$ is the $L^2$ projections of $u_j(t^n)$ into $X_h$. Note that $(\eta_{i}^n,v_{i,h})=0\hspace{2mm} \forall v_{i,h}\in X_h$.  Rewriting, we have for $v_{i,h}\in X_h$
\begin{align}
    \Bigg(&\frac{\phi_{i,h}^{n+1}-\phi_{i,h}^{n}}{\Delta  t},v_{i,h}\Bigg)+d_i\left(\nabla \phi_{i,h}^{n+1},\nabla v_{i,h}\right)-(1-\mu_i)\left(r_i(t^{n+1})\phi_{i,h}^{n+1},v_{i,h}\right)\nonumber\\&+\sum\limits_{j=1}^N\left(\frac{r_i(t^{n+1})}{K(t^{n+1})}\phi_{i,h}^{n+1}u_{j,h}^n,v_{i,h}
    \right)=d_i\left(\nabla \eta_{i}^{n+1},\nabla v_{i,h}\right)-(1-\mu_i)\left(r_i(t^{n+1})\eta_{i}^{n+1},v_{i,h}\right)\nonumber\\&+\sum\limits_{j=1}^N\left(\frac{r_i(t^{n+1})}{K(t^{n+1})}\eta_i^{n+1}u_{j,h}^n,v_{i,h}
    \right)-G(t,u_i,v_{i,h}).\label{phi-equation}
\end{align}
Choose $v_{i,h}=\phi_{i,h}^{n+1}$, and use the polarization identity in \eqref{phi-equation}, to obtain
\begin{align}
    \frac{1}{2\Delta t}\left(\|\phi_{i,h}^{n+1}-\phi_{i,h}^{n}\|^2+\|\phi_{i,h}^{n+1}\|^2-\|\phi_{i,h}^{n}\|^2\right)+d_i\|\nabla\phi_{i,h}^{n+1}\|^2-(1-\mu_i)\left(r_i(t^{n+1})\phi_{i,h}^{n+1},\phi_{i,h}^{n+1}\right)\nonumber\\+\sum\limits_{j=1}^N\left( \frac{r_i(t^{n+1}) }{K(t^{n+1})}\phi_{i,h}^{n+1}u_{j,h}^n,\phi_{i,h}^{n+1}\right)=d_i\left(\nabla \eta_{i}^{n+1},\nabla \phi_{i,h}^{n+1}\right)-(1-\mu_i)\left(r_i(t^{n+1})\eta_{i}^{n+1},\phi_{i,h}^{n+1}\right)\nonumber\\+\sum\limits_{j=1}^N\left(\frac{r_i(t^{n+1})}{K(t^{n+1})}\eta_{i}^{n+1} u_{j,h}^n,\phi_{i,h}^{n+1}\right)-G\left(t,u_i,\phi_{i,h}^{n+1}\right).\label{all-phi}
\end{align}
Now, we find the upper-bounds of terms in the above equation. Using H\"older's, and Poincar\'e inequalities, we have
\begin{align*}
    (1-\mu_i)\left(r_i(t^{n+1})\phi_{i,h}^{n+1},\phi_{i,h}^{n+1}\right)&\le|1-\mu_i|\|r_i(t^{n+1})\|_{\infty}\| \phi_{i,h}^{n+1}\|^2\\&\le C |1-\mu_i|\|r_i\|_{\infty,\infty}\|\nabla \phi_{i,h}^{n+1}\|^2.
\end{align*}
Next, using triangle, H\"older's, and Poincar\'e inequalities together with the Assumption \ref{assumption-1}, we have
\begin{align*}
    -\sum\limits_{j=1}^N\left( \frac{r_i(t^{n+1}) }{K(t^{n+1})}\phi_{i,h}^{n+1}u_{j,h}^n,\phi_{i,h}^{n+1}\right)&\le\sum\limits_{j=1}^N\frac{\|r_i(t^{n+1})\|_{\infty}}{\inf\limits_\Omega\|K(t^{n+1})\|}\Big|\left(\phi_{i,h}^{n+1}u_{j,h}^n,\phi_{i,h}^{n+1}\right)\Big|\nonumber\\&\le\sum\limits_{j=1}^N\frac{\|r_i\|_{\infty,\infty}}{K_{\min}}\|u_{j,h}^{n}\|_\infty\|\phi_{i,h}^{n+1}\|^2\nonumber\\&\le\frac{C\|r_i\|_{\infty,\infty}}{K_{\min}}
    \|\nabla\phi_{i,h}^{n+1}\|^2.
\end{align*} With the assumption $\alpha_i>0$, use Cauchy-Schwarz, and Young's inequalities, to obtain
\begin{align*}
    d_i\left(\nabla \eta_{i}^{n+1},\nabla \phi_{i,h}^{n+1}\right)\le d_i\|\nabla \eta_{i}^{n+1}\|\|\nabla \phi_{i,h}^{n+1}\|\le\frac{\alpha_i}{10}\|\nabla \phi_{i,h}^{n+1}\|^2+\frac{5d_i^2}{2\alpha_i}\|\nabla \eta_{i}^{n+1}\|^2.
\end{align*}
Using H\"older's,  Poincar\'e, and Young's inequalities, we have
\begin{align*}
    (1-\mu_i)\left(r_i(t^{n+1})\eta_{i}^{n+1},\phi_{i,h}^{n+1}\right)&\le|1-\mu_i|\|r_i(t^{n+1})\|_\infty\|\eta_{i}^{n+1}\|\|\phi_{i,h}^{n+1}\|\\&\le C|1-\mu_i|\|r_i\|_{\infty,\infty}\|\eta_{i}^{n+1}\|\|\nabla\phi_{i,h}^{n+1}\|\\&\le\frac{\alpha_i}{10}\|\nabla\phi_{i,h}^{n+1}\|^2+\frac{C(1-\mu_i)^2\|r_i\|_{\infty,\infty}^2}{\alpha_i}\|\eta_{i}^{n+1}\|^2,\\
    \sum\limits_{j=1}^N\left(\frac{r_i(t^{n+1})}{K(t^{n+1})}\eta_{i}^{n+1} u_{j,h}^n,\phi_{i,h}^{n+1}\right)&\le\sum\limits_{j=1}^N\frac{\|r_i(t^{n+1})\|_{\infty}}{\inf\limits_\Omega\|K(t^{n+1})\|}\Big|\left(\eta_{i}^{n+1} u_{j,h}^n,\phi_{i,h}^{n+1}\right)\Big|\\&\le \sum\limits_{j=1}^N\frac{\|r_i\|_{\infty,\infty}}{K_{\min}}\|\eta_{i}^{n+1}\|\| u_{j,h}^n\|_\infty\|\|\phi_{i,h}^{n+1}\|\\&\le \frac{C\|r_i\|_{\infty,\infty}}{K_{\min}}\|\eta_{i}^{n+1}\|\|\nabla\phi_{i,h}^{n+1}\|\\&\le \frac{\alpha_i}{10}\|\nabla\phi_{i,h}^{n+1}\|^2+\frac{C\|r_i\|_{\infty,\infty}^2}{\alpha_iK_{\min}^2}\|\eta_{i}^{n+1}\|^2.
\end{align*}
Now we want to find the upper-bound of \begin{align}
    G(t,u_i,\phi_{i,h}^{n+1})=\left(\frac{u_{i}(t^{n+1})-u_{i}(t^{n})}{\Delta  t}- u_{i,t}(t^{n+1}),\phi_{i,h}^{n+1}\right)\nonumber\\+\left(\frac{r_i(t^{n+1}) u_{i}(t^{n+1})}{K(t^{n+1})}\sum\limits_{j=1}^N\big\{u_{j,h}^{n}-u_{j}(t^{n+1})\big\},\phi_{i,h}^{n+1}\right).\label{G-phi}
\end{align}
For some $t^*\in[t^n,t^{n+1}]$, we use Taylor's series expansion, Poincar\'e, Cauchy-Schwarz, and Young's inequalities to obtain the following bound for the first term on the right-hand-side of \eqref{G-phi}
\begin{align*}
    \left(\frac{u_{i}(t^{n+1})-u_{i}(t^{n})}{\Delta  t}- u_{i,t}(t^{n+1}),\phi_{i,h}^{n+1}\right)&=\frac{\Delta t}{2}\left(u_{i,tt}(t^*),\phi_{i,h}^{n+1}\right)\\&\le\frac{C\Delta t}{2}\left(u_{i,tt}(t^*),\nabla\phi_{i,h}^{n+1}\right)\\&\le\frac{C\Delta t}{2}\|u_{i,tt}(t^*)\|\|\nabla\phi_{i,h}^{n+1}\|\\&\le\frac{\alpha_i}{10}\|\nabla\phi_{i,h}^{n+1}\|^2+\frac{C(\Delta t)^2}{\alpha_i}\|u_{i,tt}(t^*)\|^2.
\end{align*}
We can find the upper-bound of the last term on the right-hand-side of \eqref{G-phi} using Taylor's series expansion, Poincar\'e, H\"older's, triangle, and Young's inequalities together with the regularity assumption as
\begin{align*}
    \Bigg(\frac{r_i(t^{n+1}) u_{i}(t^{n+1})}{K(t^{n+1})}&\sum\limits_{j=1}^N\big\{u_{j,h}^{n}-u_{j}(t^{n+1})\big\},\phi_{i,h}^{n+1}\Bigg)\\&\le \frac{C\|r_i(t^{n+1})\|_{\infty}}{\inf\limits_\Omega\|K(t^{n+1})\|}\sum\limits_{j=1}^N\left(|u_{i}(t^{n+1})\big\{u_{j,h}^{n}-u_{j}(t^{n+1})\big\}\phi_{i,h}^{n+1}|\right)\\&\le\frac{C\|r_i\|_{\infty,\infty}}{K_{\min}}\|u_{i}(t^{n+1})\|_\infty\sum\limits_{j=1}^N\|u_{j,h}^{n}-u_{j}(t^{n+1})\|\|\nabla\phi_{i,h}^{n+1}\|\\&\le\sum\limits_{j=1}^N\frac{C\|r_i\|_{\infty,\infty}}{K_{\min}}\|u_{j,h}^{n}-u_{j}(t^{n+1})\|\|\nabla\phi_{i,h}^{n+1}\|\\&\le\sum\limits_{j=1}^N\left(\frac{\alpha_i}{10N}\|\nabla\phi_{i,h}^{n+1}\|^2+\frac{C\|r_i\|_{\infty,\infty}^2}{\alpha_i K_{\min}^2}\|u_{j,h}^{n}-u_{j}(t^{n+1})\|^2\right)\\&\le \frac{\alpha_i}{10}\|\nabla\phi_{i,h}^{n+1}\|^2+\frac{C\|r_i\|_{\infty,\infty}^2}{\alpha_i K_{\min}^2}\sum_{j=1}^N\big(\|u_{j,h}^{n}-u_j(t^n)\|^2+\|u_j(t^n)-u_j(t^{n+1})\|^2\big)\\&\le\frac{\alpha_i}{10}\|\nabla\phi_{i,h}^{n+1}\|^2+\frac{C\|r_i\|_{\infty,\infty}^2}{\alpha_i K_{\min}^2}\sum_{j=1}^N\big(2\|\eta_j^n\|^2+2\|\phi_{j,h}^n\|^2+(\Delta t)^2\|u_{j,t}(s^*)\|^2\big)\\&\le\frac{\alpha_i}{10}\|\nabla\phi_{i,h}^{n+1}\|^2+\frac{C\|r_i\|_{\infty,\infty}^2}{\alpha_i K_{\min}^2}\big(h^{2k+2}+(\Delta t)^2\big)+\frac{C\|r_i\|_{\infty,\infty}^2}{\alpha_i K_{\min}^2}\sum_{j=1}^N\|\phi_{j,h}^n\|^2,
\end{align*}
with $s^*\in[t^n,t^{n+1}]$.
Thus, we have
\begin{align*}
    |G(t,u_i,\phi_{i,h}^{n+1})|\le\frac{\alpha_i}{5}\|\nabla\phi_{i,h}^{n+1}\|^2+C\big(h^{2k+2}+(\Delta t)^2\big)+C\sum_{j=1}^N\|\phi_{j,h}^n\|^2.
\end{align*}
Now, using the above bounds, we can rewrite \eqref{all-phi} as\begin{align}
  \frac{1}{2\Delta t}\left(\|\phi_{i,h}^{n+1}-\phi_{i,h}^{n}\|^2+\|\phi_{i,h}^{n+1}\|^2-\|\phi_{i,h}^{n}\|^2\right)+\frac{\alpha_i}{2}\|\nabla\phi_{i,h}^{n+1}\|^2\le \frac{5d_i^2}{2\alpha_i}\|\nabla \eta_{i}^{n+1}\|^2+C\sum_{j=1}^N\|\phi_{j,h}^n\|^2\nonumber\\+\frac{C(1-\mu_i)^2\|r_i(t^{n+1})\|_\infty^2}{\alpha_i}\|\eta_{i}^{n+1}\|^2+\frac{C\|r_i\|_{\infty,\infty}^2}{\alpha_i K_{\min}^2}\|\eta_{i}^{n+1}\|^2+C\big(h^{2k+2}+(\Delta t)^2\big).
\end{align}
Using the regularity assumption again, we obtain
 \begin{align}
  \frac{1}{2\Delta t}\left(\|\phi_{i,h}^{n+1}-\phi_{i,h}^{n}\|^2+\|\phi_{i,h}^{n+1}\|^2-\|\phi_{i,h}^{n}\|^2\right)+\frac{\alpha_i}{2}\|\nabla\phi_{i,h}^{n+1}\|^2\le C\big(h^{2k}+(\Delta t)^2\big)+C\sum_{j=1}^N\|\phi_{j,h}^n\|^2.
\end{align}
Dropping non-negative term from left-hand-side, multiplying both sides by $2\Delta t$, use $\|\phi_{i,h}^0\|=0$, $\Delta{t}M=T$, and summing over time-steps $n=0,1,\cdots,M-1$ to find
\begin{align}
  \|\phi_{i,h}^{M}\|^2+\alpha_i\Delta t\sum\limits_{n=1}^{M}\|\nabla\phi_{i,h}^{n}\|^2\le \Delta t\sum\limits_{n=1}^{M-1}C\left(\sum_{j=1}^N\|\phi_{j,h}^n\|^2\right)+C\big(h^{2k}+(\Delta t)^2\big).
\end{align}
Sum over $i=1,2,\cdots,N$, we have
\begin{align}
  \sum\limits_{i=1}^N\|\phi_{i,h}^{M}\|^2+\Delta t\sum\limits_{n=1}^{M}\left(\sum\limits_{i=1}^N\alpha_i\|\nabla\phi_{i,h}^{n}\|^2\right)\le \Delta t\sum\limits_{n=1}^{M-1}C\left(\sum_{i=1}^N\|\phi_{i,h}^n\|^2\right)+C\big(h^{2k}+(\Delta t)^2\big).
\end{align}
Applying the discrete Gr\"onwall Lemma \ref{dgl}, we have
\begin{align}
  \sum\limits_{i=1}^N\|\phi_{i,h}^{M}\|^2+\Delta t\sum\limits_{n=1}^{M}\left(\sum\limits_{i=1}^N\alpha_i\|\nabla\phi_{i,h}^{n}\|^2\right)\le C \big(h^{2k}+(\Delta t)^2\big),
\end{align}
which gives
\begin{align}
  \|\phi_{i,h}^{M}\|^2+\alpha_i\Delta t\sum\limits_{n=1}^{M}\|\nabla\phi_{i,h}^{n}\|^2\le C \big(h^{2k}+(\Delta t)^2\big)\hspace{4mm}\text{for}\hspace{1mm}i=1,2,\cdots,N.\label{bound-on-phi}
\end{align}
Use of triangle and Young's inequalities allows us to write
\begin{align}
    \|e_i^M\|^2+\alpha_i\Delta t\sum_{n=1}^M\|\nabla e_i^n\|^2\le 2\left(\|\phi_{i,h}^{M}\|^2+\alpha_i\Delta t\sum\limits_{n=1}^{M}\|\nabla\phi_{i,h}^{n}\|^2+\|\eta_i^M\|^2+\alpha_i\Delta t\sum\limits_{n=1}^{M}\|\nabla\eta_{i}^{n}\|^2\right).
\end{align}
Using regularity assumptions and bound in \eqref{bound-on-phi}, we have
\begin{align}
    \|u_i(T)-u_{i,h}^M\|^2+\alpha_i\Delta t\sum_{n=1}^M\|\nabla \big(u_i(t^n)-u_{i,h}^n\big)\|^2\le C\big(h^{2k}+(\Delta t)^2\big)\hspace{4mm}\text{for}\hspace{1mm}i=1,2,\cdots,N.
\end{align}
Now, summing over $i=1,2,\cdots,N$ completes the proof.
\end{proof}

\begin{theorem} (Error estimate of DBDF-2) For $i=1,2,\cdots,N$, assume $u_i$ solves \eqref{RDE1} and satisfies\begin{align*}
    u_i\in L^\infty\left(0,T;H^{k+1}(\Omega)^d\right),& u_{i,tt}\in L^\infty\left(0,T;L^2(\Omega)^d\right),u_{i,ttt}\in L^\infty\left(0,T;L^2(\Omega)^d\right),\\&r_i\in L^\infty\left(0,T;L^\infty(\Omega)^d\right),\text{ and } K_{\min}>0,
\end{align*}
   if $\alpha_i>0$ then for $\Delta t>0$ the solution $u_{i,h}$ to the Algorithm \ref{Algn2} converges to the true solution with
 \begin{align}
    \|u_i(T)-u_{i,h}^M\|+\Big\{\alpha_i\Delta t\sum\limits_{n=2}^M\|\nabla(u_i(t^n)-u_{i,h}^n)\|^2\Big\}^{1/2}\le C(h^k+\Delta t^2).
\end{align}\label{Convergence-analysis-BDF-2}
\end{theorem}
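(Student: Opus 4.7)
The plan is to mirror the proof of Theorem \ref{Convergence-analysis-BE} for the first-order scheme, but now replacing the backward-Euler polarization identity with the BDF-2 identity \eqref{ident}, and accounting for the extrapolation $2u_{j,h}^n-u_{j,h}^{n-1}$ that linearizes the reaction term. First I would evaluate \eqref{RDE1} in the continuous weak form at $t^{n+1}$ using the BDF-2 quotient, producing a consistency remainder
\[
\tau_i^{n+1}:=\frac{3u_i(t^{n+1})-4u_i(t^n)+u_i(t^{n-1})}{2\Delta t}-u_{i,t}(t^{n+1}),
\]
which, by Taylor expansion around $t^{n+1}$ under the regularity $u_{i,ttt}\in L^\infty(0,T;L^2(\Omega)^d)$, satisfies $\|\tau_i^{n+1}\|\le C\Delta t^2\|u_{i,ttt}\|_{L^\infty(0,T;L^2)}$. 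Similarly, the extrapolation error $2u_j(t^n)-u_j(t^{n-1})-u_j(t^{n+1})$ is $O(\Delta t^2)$ via Taylor expansion using $u_{j,tt}\in L^\infty(0,T;L^2(\Omega)^d)$. Subtracting \eqref{disc-weak-form2} from this continuous analog yields an error equation in $e_i^{n+1}=u_i(t^{n+1})-u_{i,h}^{n+1}$, and I split it as $e_i^n=\eta_i^n-\phi_{i,h}^n$ with $\eta_i^n:=u_i(t^n)-P^{L^2}_{X_h}u_i(t^n)$, so that $(\eta_i^n,v_{i,h})=0$ for all $v_{i,h}\in X_h$.

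Next, I would test the resulting $\phi_{i,h}$-equation with $v_{i,h}=\phi_{i,h}^{n+1}$ and apply the identity \eqref{ident} to the time-derivative term, producing the telescoping quantity
\[
\tfrac{1}{4\Delta t}\Big(\|\phi_{i,h}^{n+1}\|^2+\|2\phi_{i,h}^{n+1}-\phi_{i,h}^{n}\|^2-\|\phi_{i,h}^{n}\|^2-\|2\phi_{i,h}^{n}-\phi_{i,h}^{n-1}\|^2+\|\phi_{i,h}^{n+1}-2\phi_{i,h}^n+\phi_{i,h}^{n-1}\|^2\Big),
\]
plus the diffusion term $d_i\|\nabla\phi_{i,h}^{n+1}\|^2$. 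On the right-hand side, the linear reaction term $(1-\mu_i)(r_i\phi_{i,h}^{n+1},\phi_{i,h}^{n+1})$ and the quasi-nonlinear term $\sum_j\big(\tfrac{r_i}{K}\phi_{i,h}^{n+1}(2u_{j,h}^n-u_{j,h}^{n-1}),\phi_{i,h}^{n+1}\big)$ are absorbed into $\alpha_i\|\nabla\phi_{i,h}^{n+1}\|^2$ on the left via Hölder, Poincaré, Assumption \ref{assumption-1}, and the definition \eqref{alpha-def} of $\alpha_i$ (noting $\|2u_{j,h}^n-u_{j,h}^{n-1}\|_\infty\le 3C$). The projection terms involving $\eta_i^{n+1}$ and the consistency terms involving $\tau_i^{n+1}$ and the extrapolation error are bounded by Cauchy–Schwarz and Young's inequality, each generating an $\tfrac{\alpha_i}{k}\|\nabla\phi_{i,h}^{n+1}\|^2$ piece hidden on the left and residuals of the form $C(h^{2k}+(\Delta t)^4)$, plus linear-in-$\|\phi_{j,h}^n\|^2$ and $\|\phi_{j,h}^{n-1}\|^2$ couplings coming from the extrapolation of $u_{j,h}$ inside the nonlinearity.

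After multiplying by $4\Delta t$, summing over $n=1,\dots,M-1$, dropping the non-negative squared-difference and starred $\|2\phi_{i,h}^{n+1}-\phi_{i,h}^{n}\|^2$ terms only as needed, and summing over $i=1,\dots,N$, I obtain an inequality of the form
\[
\sum_i\|\phi_{i,h}^M\|^2+\Delta t\sum_{n=2}^{M}\sum_i\alpha_i\|\nabla\phi_{i,h}^n\|^2\le C\Delta t\sum_{n=1}^{M-1}\sum_i\|\phi_{i,h}^n\|^2+C\big(h^{2k}+(\Delta t)^4\big)+C\big(\|\phi_{i,h}^1\|^2+\|2\phi_{i,h}^1-\phi_{i,h}^0\|^2\big),
\]
where the last bracket is handled by assuming $u_{i,h}^0,u_{i,h}^1$ are prepared to $O(h^k+\Delta t^2)$ accuracy (standard for starting a BDF-2 scheme, e.g.\ via one step of Algorithm \ref{Algn1} on a finer grid, or via $L^2$ projection of the true data). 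Applying the discrete Grönwall inequality (Lemma \ref{dgl}) then yields $\|\phi_{i,h}^M\|^2+\alpha_i\Delta t\sum_{n=2}^M\|\nabla\phi_{i,h}^n\|^2\le C(h^{2k}+(\Delta t)^4)$, and finally the triangle inequality together with the approximation properties \eqref{AppPro3}--\eqref{AppPro4} converts the bound on $\phi_{i,h}$ into the stated bound on $u_i(t^n)-u_{i,h}^n$ after taking square roots.

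The main obstacle I expect is the careful handling of the extrapolation term $2u_{j,h}^n-u_{j,h}^{n-1}$ inside the nonlinearity: it must simultaneously (i) be replaced by $2u_j(t^n)-u_j(t^{n-1})$ modulo $\phi_{j,h}^n,\phi_{j,h}^{n-1}$ contributions that feed Grönwall, (ii) be compared to $u_j(t^{n+1})$ with second-order Taylor residual, and (iii) allow the factor $u_i(t^{n+1})$ multiplying the difference to be pulled out in $L^\infty$ using the regularity $u_i\in L^\infty(0,T;H^{k+1})\hookrightarrow L^\infty$ (for $k\ge 1$ in $d\le 3$). Balancing these three simultaneous uses of the extrapolation, together with the heavier bookkeeping of the BDF-2 identity and the need to retain enough of the squared-difference terms to ensure all intermediate inequalities are sign-consistent, is what distinguishes this proof from the backward-Euler case.
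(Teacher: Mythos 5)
Your proposal is correct and follows essentially the same route as the paper's proof: the same BDF-2 error equation with the nonlinearity split into an $e_i^{n+1}$-times-discrete-extrapolation piece and a $u_i(t^{n+1})$-times-extrapolated-error piece, the same $\eta$--$\phi$ decomposition via the $L^2$ projection, testing with $\phi_{i,h}^{n+1}$ and the identity \eqref{ident}, absorption of the reaction terms into $\alpha_i$ using Assumption \ref{assumption-1}, second-order Taylor bounds on the consistency and extrapolation errors, and the discrete Gr\"onwall lemma. The only cosmetic differences are that you pull $u_i(t^{n+1})$ out in $L^\infty$ (via $H^{k+1}\hookrightarrow L^\infty$, $k\ge 1$, $d\le 3$) where the paper uses an $L^6$--$L^3$ H\"older split with the $H^1$ embedding, and you allow $O(h^k+\Delta t^2)$-accurate starting values where the paper simply takes $\|\phi_{i,h}^0\|=\|\phi_{i,h}^1\|=0$; neither changes the argument.
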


\begin{proof}
At first we build an error equation at the time level $t^{n+1}$, the continuous variational formulations can be written as $\forall v_{i,h}\in X_h$
\begin{align}
    \Bigg(&\frac{3u_{i}(t^{n+1})-4u_{i}(t^{n})+u_{i}(t^{n-1})}{2\Delta  t},v_{i,h}\Bigg)+d_i\left(\nabla u_{i}(t^{n+1}),\nabla v_{i,h}\right)=(1-\mu_i)\left(r_i(t^{n+1})u_{i}(t^{n+1}),v_{i,h}\right)\nonumber\\&-\left(\frac{r_i(t^{n+1}) u_{i}(t^{n+1})}{K(t^{n+1})}\sum\limits_{j=1}^Nu_{j}(t^{n+1}),v_{i,h}\right)+\left(\frac{3u_{i}(t^{n+1})-4u_{i}(t^{n})+u_{i}(t^{n-1})}{2\Delta  t}- u_{i,t}(t^{n+1}),v_{i,h}\right)\nonumber\\&+\left(f_i(t^{n+1}),v_{i,h}\right).\label{cont-weak-form2}
\end{align}
Subtract \eqref{disc-weak-form2} from \eqref{cont-weak-form2} and then rearranging yields
\begin{align}
    \left(\frac{3e_{i}^{n+1}-4e_{i}^{n}+e_{i}^{n-1}}{2\Delta  t},v_{i,h}\right)+d_i\left(\nabla e_{i}^{n+1},\nabla v_{i,h}\right)-(1-\mu_i)\left(r_i(t^{n+1})e_{i}^{n+1},v_{i,h}\right)\nonumber\\+\sum\limits_{j=1}^N\left(\frac{r_i(t^{n+1})}{K(t^{n+1})}e_i^{n+1}(2u_{j,h}^n-u_{j,h}^{n-1}),v_{i,h}
    \right)+\sum\limits_{j=1}^N\left(\frac{r_i(t^{n+1})}{K(t^{n+1})}u_i(t^{n+1})(2e_{j}^n-e_{j}^{n-1}),v_{i,h}
    \right)\nonumber\\=G(t,u_i,v_{i,h}),\label{error-equation2}
\end{align}
where\begin{align*}
    G(t,u_i,v_{i,h})=\left(\frac{3u_{i}(t^{n+1})-4u_{i}(t^{n})+u_{i}(t^{n-1})}{2\Delta  t}- u_{i,t}(t^{n+1}),v_{i,h}\right)\\-\left(\frac{r_i(t^{n+1}) u_{i}(t^{n+1})}{K(t^{n+1})}\sum\limits_{j=1}^N\big\{u_{j}(t^{n+1})-2u_{j}(t^{n})+u_j(t^{n-1})\big\},v_{i,h}\right).
\end{align*}
Now we decompose the errors as\begin{align*}
    e_{i}^n:& = u_i(t^n)-u_{i,h}^n=(u_i(t^n)-\tilde{u}_i^n)-(u_{i,h}^n-\tilde{u}_i^n):=\eta_{i}^n-\phi_{i,h}^n,
\end{align*}
where $\tilde{u}_i^n: =P_{X_h}^{L^2}(u_i(t^n))\in X_h$ is the $L^2$ projections of $u_j(t^n)$ into $X_h$. Note that $(\eta_{i}^n,v_{i,h})=0\hspace{2mm} \forall v_{i,h}\in X_h$.  Rewriting, we have for $v_{i,h}\in X_h$
\begin{align}
    \Bigg(&\frac{3\phi_{i,h}^{n+1}-4\phi_{i,h}^{n}+\phi_{i,h}^{n-1}}{2\Delta  t},v_{i,h}\Bigg)+d_i\left(\nabla \phi_{i,h}^{n+1},\nabla v_{i,h}\right)-(1-\mu_i)\left(r_i(t^{n+1})\phi_{i,h}^{n+1},v_{i,h}\right)\nonumber\\&+\sum\limits_{j=1}^N\left(\frac{r_i(t^{n+1})}{K(t^{n+1})}\phi_{i,h}^{n+1}(2u_{j,h}^n-u_{j,h}^{n-1}),v_{i,h}
    \right)+\sum\limits_{j=1}^N\left(\frac{r_i(t^{n+1})}{K(t^{n+1})}u_i(t^{n+1})(2\phi_{j,h}^{n}-\phi_{j,h}^{n-1}),v_{i,h}
    \right)\nonumber\\&=d_i\left(\nabla \eta_{i}^{n+1},\nabla v_{i,h}\right)-(1-\mu_i)\left(r_i(t^{n+1})\eta_{i}^{n+1},v_{i,h}\right)\nonumber\\&+\sum\limits_{j=1}^N\left(\frac{r_i(t^{n+1})}{K(t^{n+1})}\eta_i^{n+1}(2u_{j,h}^n-u_{j,h}^{n-1}),v_{i,h}
    \right)+\sum\limits_{j=1}^N\left(\frac{r_i(t^{n+1})}{K(t^{n+1})}u_i(t^{n+1})(2\eta_{j}^{n}-\eta_{j}^{n-1}),v_{i,h}
    \right)\nonumber\\&-G(t,u_i,v_{i,h}).\label{phi-equation2}
\end{align}
Choose $v_{i,h}=\phi_{i,h}^{n+1}$, and use the identity in \eqref{ident}, to obtain
\begin{align}
    &\frac{1}{4\Delta t}\Big(\|\phi_{i,h}^{n+1}\|^2-\|\phi_{i,h}^{n}\|^2+\|2\phi_{i,h}^{n+1}-\phi_{i,h}^{n}\|^2-\|2\phi_{i,h}^{n}-\phi_{i,h}^{n-1}\|^2+\|\phi_{i,h}^{n+1}-2\phi_{i,h}^{n}+\phi_{i,h}^{n-1}\|^2\Big)\nonumber\\&+d_i\|\nabla \phi_{i,h}^{n+1}\|^2-(1-\mu_i)\left(r_i(t^{n+1})\phi_{i,h}^{n+1},\phi_{i,h}^{n+1}\right)+\sum\limits_{j=1}^N\left(\frac{r_i(t^{n+1})}{K(t^{n+1})}\phi_{i,h}^{n+1}(2u_{j,h}^n-u_{j,h}^{n-1}),\phi_{i,h}^{n+1}
    \right)\nonumber\\&+\sum\limits_{j=1}^N\left(\frac{r_i(t^{n+1})}{K(t^{n+1})}u_i(t^{n+1})(2\phi_{j,h}^{n}-\phi_{j,h}^{n-1}),\phi_{i,h}^{n+1}
    \right)=d_i\left(\nabla \eta_{i}^{n+1},\nabla \phi_{i,h}^{n+1}\right)\nonumber\\&-(1-\mu_i)\left(r_i(t^{n+1})\eta_{i}^{n+1},\phi_{i,h}^{n+1}\right)+\sum\limits_{j=1}^N\left(\frac{r_i(t^{n+1})}{K(t^{n+1})}\eta_i^{n+1}(2u_{j,h}^n-u_{j,h}^{n-1}),\phi_{i,h}^{n+1}
    \right)\nonumber\\&+\sum\limits_{j=1}^N\left(\frac{r_i(t^{n+1})}{K(t^{n+1})}u_i(t^{n+1})(2\eta_{j}^{n}-\eta_{j}^{n-1}),\phi_{i,h}^{n+1}
    \right)-G\left(t,u_i,\phi_{i,h}^{n+1}\right).\label{all-phi-2}
\end{align}
Now, we find the upper-bounds of terms in \eqref{all-phi-2}. Using H\"older's, and Poincar\'e inequalities, we have
\begin{align*}
    (1-\mu_i)\left(r_i(t^{n+1})\phi_{i,h}^{n+1},\phi_{i,h}^{n+1}\right)&\le|1-\mu_i|\|r_i(t^{n+1})\|_{\infty}\| \phi_{i,h}^{n+1}\|^2\le C |1-\mu_i|\|r_i\|_{\infty,\infty}\|\nabla \phi_{i,h}^{n+1}\|^2.
\end{align*}
Next, using triangle, H\"older's, and Poincar\'e inequalities together with the Assumption \ref{assumption-1}, we have
\begin{align*}
    -\sum\limits_{j=1}^N\left(\frac{r_i(t^{n+1})}{K(t^{n+1})}\phi_{i,h}^{n+1}(2u_{j,h}^n-u_{j,h}^{n-1}),\phi_{i,h}^{n+1}
    \right)&\le\sum\limits_{j=1}^N\Big\|\frac{r_i(t^{n+1})}{K(t^{n+1})}\Big\|_{\infty}\|2u_{j,h}^n-u_{j,h}^{n-1}\|_{\infty}\|\phi_{i,h}^{n+1}\|^2\\&\le\frac{C\|r_i\|_{\infty,\infty}}{K_{\min}}
    \|\nabla\phi_{i,h}^{n+1}\|^2.
    \end{align*}
    With the assumption $\alpha_i>0$, use H\"older's inequality, Sobolev embedding theorem, Poincar\'e and  Young's inequalities, and regularity assumption, we obtain
    \begin{align*}
      -\sum\limits_{j=1}^N\bigg(\frac{r_i(t^{n+1})}{K(t^{n+1})}u_i(t^{n+1})(2\phi_{j,h}^{n}-\phi_{j,h}^{n-1}),&\phi_{i,h}^{n+1}
    \bigg)\le\sum\limits_{j=1}^N\frac{\|r_i\|_{\infty,\infty}}{K_{\min}}\|u_i(t^{n+1})\|_{L^6}\|\phi_{i,h}^{n+1}\|_{L^3}\|2\phi_{j,h}^{n}-\phi_{j,h}^{n-1}\|\\&\le\sum\limits_{j=1}^N\frac{\|r_i\|_{\infty,\infty}}{K_{\min}}\|u_i(t^{n+1})\|_{H^1}\|\phi_{i,h}^{n+1}\|^\frac{1}{2}\|\nabla\phi_{i,h}^{n+1}\|^\frac{1}{2}\|2\phi_{j,h}^{n}-\phi_{j,h}^{n-1}\|\\&\le\sum\limits_{j=1}^N\left(\frac{\alpha_i}{14N}\|\nabla\phi_{i,h}^{n+1}\|^2+\frac{C\|r_i\|_{\infty,\infty}^2}{\alpha_iK_{\min}^2}\|2\phi_{j,h}^{n}-\phi_{j,h}^{n-1}\|^2\right)\\&\le\frac{\alpha_i}{14}\|\nabla\phi_{i,h}^{n+1}\|^2+\frac{C\|r_i\|_{\infty,\infty}^2}{\alpha_iK_{\min}^2}\sum\limits_{j=1}^N\|2\phi_{j,h}^{n}-\phi_{j,h}^{n-1}\|^2.
    \end{align*}
Use Cauchy-Schwarz, and Young's inequalities, to obtain
\begin{align*}
    d_i\left(\nabla \eta_{i}^{n+1},\nabla \phi_{i,h}^{n+1}\right)\le d_i\|\nabla \eta_{i}^{n+1}\|\|\nabla \phi_{i,h}^{n+1}\|\le\frac{\alpha_i}{14}\|\nabla \phi_{i,h}^{n+1}\|^2+\frac{7d_i^2}{2\alpha_i}\|\nabla \eta_{i}^{n+1}\|^2.
\end{align*}
Using H\"older's, Sobolev embedding theorem,  Poincar\'e, and Young's inequalities, we have
\begin{align*}
    (1-\mu_i)\left(r_i(t^{n+1})\eta_{i}^{n+1},\phi_{i,h}^{n+1}\right)&\le|1-\mu_i|\|r_i(t^{n+1})\|_{\infty}\|\eta_{i}^{n+1}\|\|\nabla\phi_{i,h}^{n+1}\|\\&\le \frac{\alpha_i}{14}\|\nabla\phi_{i,h}^{n+1}\|^2+\frac{7(1-\mu_i)^2\|r_i\|^2_{\infty,\infty}}{2\alpha_i}\|\eta_{i}^{n+1}\|^2.
\end{align*}
Using H\"older's inequality, and triangle inequality, Assumption \ref{assumption-1},  Poincar\'e, and  Young's inequalities, we have
\begin{align*}
    \sum\limits_{j=1}^N\left(\frac{r_i(t^{n+1})}{K(t^{n+1})}\eta_{i}^{n+1} \big(2u_{j,h}^n-u_{j,h}^{n-1}\big),\phi_{i,h}^{n+1}\right)&\le\sum\limits_{j=1}^N\frac{\|r_i\|_{\infty,\infty}}{K_{\min}}\|\eta_{i}^{n+1}\|\|2u_{j,h}^n-u_{j,h}^{n-1}\|_{\infty}\|\phi_{i,h}^{n+1}\|\\&\le \sum\limits_{j=1}^N\frac{\|r_i\|_{\infty,\infty}}{K_{\min}}\|\eta_{i}^{n+1}\|\left(2\| u_{j,h}^n\|_\infty+\| u_{j,h}^{n-1}\|_\infty\right)\|\nabla\phi_{i,h}^{n+1}\|\\&\le \frac{C\|r_i\|_{\infty,\infty}}{K_{\min}}\|\eta_{i}^{n+1}\|\|\nabla\phi_{i,h}^{n+1}\|\\&\le \frac{\alpha_i}{14}\|\nabla\phi_{i,h}^{n+1}\|^2+\frac{C\|r_i\|^2_{\infty,\infty}}{\alpha_iK_{\min}^2}\|\eta_{i}^{n+1}\|^2.
\end{align*}
Using H\"older's inequality, Sobolev embedding theorem,,  Poincar\'e inequality, regularity assumption, and  Young's inequality, we have
\begin{align*}
    \sum\limits_{j=1}^N\left(\frac{r_i(t^{n+1})}{K(t^{n+1})}u_i(t^{n+1})(2\eta_{j}^{n}-\eta_{j}^{n-1}),\phi_{i,h}^{n+1}
    \right)&\le\sum\limits_{j=1}^N\frac{\|r_i\|_{\infty,\infty}}{K_{\min}}\|u_i(t^{n+1})\|_{L^6}\|2\eta_{j}^{n}-\eta_{j}^{n-1}\|\|\phi_{i,h}^{n+1}\|_{L^3}\\&\le\sum\limits_{j=1}^N\frac{\|r_i\|_{\infty,\infty}}{K_{\min}}\|u_i(t^{n+1})\|_{H^1}\|2\eta_{j}^{n}-\eta_{j}^{n-1}\|\|\phi_{i,h}^{n+1}\|^{1/2}\|\nabla\phi_{i,h}^{n+1}\|^{1/2}\\&\le\sum\limits_{j=1}^N\frac{C\|r_i\|_{\infty,\infty}}{K_{\min}}\|u_i\|_{L^\infty\big(0,T;H^1(\Omega)^d\big)}\|2\eta_{j}^{n}-\eta_{j}^{n-1}\|\|\nabla\phi_{i,h}^{n+1}\|\\&\le\frac{\alpha_i}{14}\|\nabla\phi_{i,h}^{n+1}\|^2+\frac{C\|r_i\|_{\infty,\infty}^2}{K_{\min}^2}\sum\limits_{j=1}^N\|2\eta_{j}^{n}-\eta_{j}^{n-1}\|^2.
\end{align*}
Using Taylor’s series, Cauchy-Schwarz and Young’s inequalities the last term is evaluated
as
\begin{align*}
    \Big|-G\left(t,u_i,\phi_{i,h}^{n+1}\right)\Big|&\le \Delta t^2\frac{C\|r_i\|_{\infty,\infty}}{K_{\min}}\|u_i\|_{L^\infty\big(0,T;H^1(\Omega)^d\big)}\sum\limits_{j=1}^N\|u_{j,tt}\|_{L^\infty\big(0,T;L^2(\Omega)^d\big)}\|\nabla\phi_{i,h}^{n+1}\|\\&+C\Delta t^2\|u_{i,ttt}\|_{L^\infty\big(0,T;L^2(\Omega)^d\big)}\|\nabla\phi_{i,h}^{n+1}\|\\&\le \frac{\alpha_i}{7}\|\nabla\phi_{i,h}^{n+1}\|^2+\frac{C\Delta t^4\|r_i\|_{\infty,\infty}^2}{\alpha_iK_{\min}^2}.
\end{align*}
Using these estimates in \eqref{all-phi-2} and reducing yields
\begin{align}
    &\frac{1}{4\Delta t}\Big(\|\phi_{i,h}^{n+1}\|^2-\|\phi_{i,h}^{n}\|^2+\|2\phi_{i,h}^{n+1}-\phi_{i,h}^{n}\|^2-\|2\phi_{i,h}^{n}-\phi_{i,h}^{n-1}\|^2+\|\phi_{i,h}^{n+1}-2\phi_{i,h}^{n}+\phi_{i,h}^{n-1}\|^2\Big)\nonumber\\&+\frac{\alpha_i}{2}\|\nabla\phi_{i,h}^{n+1}\|^2\le \frac{C\|r_i\|_{\infty,\infty}^2}{\alpha_iK_{\min}^2}\sum\limits_{j=1}^N\|2\phi_{j,h}^{n}-\phi_{j,h}^{n-1}\|^2+\frac{7d_i^2}{2\alpha_i}\|\nabla \eta_{i}^{n+1}\|^2+\frac{7(1-\mu_i)^2\|r_i\|^2_{\infty,\infty}}{2\alpha_i}\|\eta_{i}^{n+1}\|^2\nonumber\\&+\frac{C\|r_i\|^2_{\infty,\infty}}{\alpha_iK_{\min}^2}\|\eta_{i}^{n+1}\|^2+\frac{C\|r_i\|_{\infty,\infty}^2}{K_{\min}^2}\sum\limits_{j=1}^N\|2\eta_{j}^{n}-\eta_{j}^{n-1}\|^2+\frac{C\Delta t^4\|r_i\|_{\infty,\infty}^2}{\alpha_iK_{\min}^2}\nonumber\\&\le C\sum\limits_{j=1}^N\|2\phi_{j,h}^{n}-\phi_{j,h}^{n-1}\|^2+Ch^{2k}+Ch^{2k+2}+C\Delta t^4.
\end{align}
Dropping non-negative term from left-hand-side, multiplying both sides by $4\Delta t$, using $\|\phi_{i,h}^0\|=\|\phi_{i,h}^1\|=0$, and summing over the time-steps $n=1,2,\cdots,M-1$, we have
\begin{align}
    \|\phi_{i,h}^{M}\|^2+\|2\phi_{i,h}^{M}-\phi_{i,h}^{M-1}\|^2+2\alpha_i\Delta t\sum\limits_{n=2}^M\|\nabla\phi_{i,h}^n\|^2\nonumber\\\le C\Delta t\sum\limits_{n=1}^{M-1}\sum\limits_{j=1}^N\|2\phi_{j,h}^{n}-\phi_{j,h}^{n-1}\|^2+C(h^{2k}+\Delta t^4).
\end{align}
Sum over $i=1,2,\cdots,N$, drop non-negative terms from left-hand-side, and reducing, gives
\begin{align*}
    \sum\limits_{i=1}^N\|\phi_{i,h}^{M}\|^2+2\alpha_i\Delta t\sum\limits_{n=2}^M\sum\limits_{i=1}^N\|\nabla\phi_{i,h}^n\|^2\le C\Delta t\sum\limits_{n=2}^{M-1}\sum\limits_{i=1}^N\|\phi_{i,h}^n\|^2+C(h^{2k}+\Delta t^4).
\end{align*}
Applying the discrete Gr\"onwall Lemma \ref{dgl}, we have
\begin{align}
    \sum\limits_{i=1}^N\|\phi_{i,h}^{M}\|^2+2\alpha_i\Delta t\sum\limits_{n=2}^M\sum\limits_{i=1}^N\|\nabla\phi_{i,h}^n\|^2\le C(h^{2k}+\Delta t^4),
\end{align}
for $i=1,2,\cdots,N$, which gives
\begin{align}
    \|\phi_{i,h}^{M}\|^2+2\alpha_i\Delta t\sum\limits_{n=2}^M\|\nabla\phi_{i,h}^n\|^2\le C(h^{2k}+\Delta t^4).
\end{align}
Use of triangle and Young's inequalities, and regularity assumption completes the proof.
\end{proof}

Now we proof the assumption  $\|u_{i,h}^n\|_{\infty}\le C$ that was used in stability Theorems \ref{stability-theorm}-\ref{stability-theorm2}  and in convergence Theorems \ref{Convergence-analysis-BE}-\ref{Convergence-analysis-BDF-2} by principle of mathematical induction. The strategy of this proof is adopted from the idea of Wong in the analysis of three-species competition model \cite{wong2009analysis}.

\begin{lemma}
 $\|u_{i,h}^n\|_{\infty}\le C$, $\forall n\in\mathbb{N}$.\label{lemma-discrete-bound}
\end{lemma}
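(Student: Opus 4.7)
The plan is to prove Lemma \ref{lemma-discrete-bound} by strong induction on $n$, following the bootstrapping strategy of Wong \cite{wong2009analysis}: use the $L^2$ convergence estimate (which itself assumes the bound up to the previous step) together with an inverse inequality to upgrade to an $L^\infty$ bound one step further. The base case at $n=0$ (and also at $n=1$ for DBDF-2, which can be initialized by a single DBE step already covered by the induction) is immediate from the $L^\infty$ regularity assumed on the starting data.

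For the inductive step, I would assume $\|u_{j,h}^k\|_\infty \le C$ for every $j=1,\ldots,N$ and every $k \le n$. This is exactly what is needed so that Assumption \ref{assumption-1} holds on the discrete trajectory entering the computation of $u_{i,h}^{n+1}$ in Algorithm \ref{Algn1} (which only invokes $u_{j,h}^n$) or Algorithm \ref{Algn2} (which only invokes $2u_{j,h}^n - u_{j,h}^{n-1}$). Replaying the energy/Gr\"onwall argument of Theorem \ref{Convergence-analysis-BE} or \ref{Convergence-analysis-BDF-2} through time level $t^{n+1}$ then yields
\begin{equation*}
\|\phi_{i,h}^{n+1}\| \le C(h^k + \Delta t^q),
\end{equation*}
with $q=1$ for DBE and $q=2$ for DBDF-2, where $\phi_{i,h}^{n+1}=u_{i,h}^{n+1}-P_{X_h}^{L^2}(u_i(t^{n+1})) \in X_h$.

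To convert this $L^2$ control into an $L^\infty$ bound on $u_{i,h}^{n+1}$, I would decompose
\begin{equation*}
u_{i,h}^{n+1} = u_i(t^{n+1}) - \eta_i^{n+1} + \phi_{i,h}^{n+1},
\end{equation*}
bound $\|u_i(t^{n+1})\|_\infty$ by the assumed regularity of $u_i$, $\|\eta_i^{n+1}\|_\infty$ by a standard $L^\infty$ projection estimate, and $\|\phi_{i,h}^{n+1}\|_\infty$ by the inverse inequality $\|\phi_{i,h}^{n+1}\|_\infty \le C h^{-d/2}\|\phi_{i,h}^{n+1}\|$ on the quasi-uniform mesh. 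Assembling these gives
\begin{equation*}
\|u_{i,h}^{n+1}\|_\infty \le C_\star + C h^{-d/2}(h^k + \Delta t^q),
\end{equation*}
which, under the mild scaling $\Delta t^q = O(h^{d/2})$ together with $k \ge d/2$, is bounded by a constant independent of $n$, closing the induction.

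The main obstacle will be ensuring that the constant produced by the discrete Gr\"onwall Lemma \ref{dgl} does not degrade as $n$ grows. Because Lemma \ref{dgl} contributes only an exponential factor depending on the final time $T = M\Delta t$ and not on the intermediate step index $n+1$, the convergence constant remains uniform in $n$ for all $n\Delta t \le T$, which is exactly the regime of interest. The only other technicality is the second starting value needed by DBDF-2, handled by the DBE-initialization remark in the base case.
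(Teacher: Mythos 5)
Your proposal is correct and follows the same overall strategy as the paper: induction on the time level, re-running the convergence estimate for $\phi_{i,h}^{n+1}$ (which only needs the $L^\infty$ bound on earlier levels, so there is no circularity), and an inverse-type inequality to pass from $L^2$ to $L^\infty$, exactly in the spirit of Wong. The one genuine difference is where the inverse inequality is applied. The paper first writes $\|u_{i,h}^{L+1}\|_\infty\le Ch^{-3/2}\|u_{i,h}^{L+1}\|$ (Agmon plus the discrete inverse inequality applied to the whole discrete iterate) and only then splits inside the $L^2$ norm, which leaves the terms $h^{-3/2}\|\eta_i^{L+1}\|$ and $h^{-3/2}\|u_i(t^{L+1})\|$ in its final bound, i.e.\ $C\bigl(h^{k-3/2}+h^{-3/2}\Delta t+h^{k-1/2}+h^{-3/2}\bigr)$; you instead split first, apply the inverse inequality only to the finite element part $\phi_{i,h}^{n+1}\in X_h$, and control $u_i(t^{n+1})$ and $\eta_i^{n+1}$ directly in $L^\infty$ via regularity and a projection estimate. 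Your version is the cleaner bookkeeping: it avoids the standalone $h^{-3/2}$ factor multiplying $\|u_i(t^{L+1})\|$ that appears in the paper's displayed bound (and is not uniformly bounded as $h\to 0$), and it makes explicit the mesh/time-step coupling $\Delta t^{q}=O(h^{d/2})$ with $k\ge d/2$ that is needed to close the induction, a condition the paper leaves implicit (its bound contains $h^{-3/2}\Delta t$, which silently requires $\Delta t\lesssim h^{3/2}$). Your handling of the DBDF-2 second starting value by a DBE step and your observation that the discrete Gr\"onwall constant depends only on $T$, not on the intermediate level, are both consistent with what the paper needs.
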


\begin{proof}
Basic step: $u_{i,h}^0=I_h(u_i(0,\bx)),$
where $I_h$ is an appropriate interpolation operator. If $u_i(0,\bx)$ is sufficiently regular for $\bx\in\Omega$, we have $\|u_{i,h}^0\|_\infty\le C$, for some constant $C>0$.

Inductive step: Assume for some $L\in\mathbb{N}$, $\|u_{i,h}^L\|_\infty\le C$ holds true. Then we have
\begin{align*}
    \|u_{i,h}^{L+1}\|_\infty&\le Ch^{-\frac{3}{2}}\|u_{i,h}^{L+1}\|\hspace{2mm}\text{(Agmon’s inequality \cite{Robinson2016Three-Dimensional} and discrete inverse inequality)}\\&=Ch^{-\frac{3}{2}}\|u_{i,h}^{L+1}-u_i(t^{L+1})+u_i(t^{L+1})\|\\&\le Ch^{-\frac{3}{2}}\left(\|u_{i,h}^{L+1}-u_i(t^{L+1})\|+\|u_i(t^{L+1})\|\right)\hspace{2mm}\text{(Triangle inequality)}\\&\le Ch^{-\frac{3}{2}}\left(\|\phi_{i,h}^{L+1}\|+\|\eta_i^{L+1}\|+\|u_i(t^{L+1})\|\right)\hspace{2mm}\text{(Triangle inequality)}.
\end{align*}
Using inductive hypothesis in \eqref{bound-on-phi}, and regularity assumption, the above bound can be written as
\begin{align}
    \|u_{i,h}^{L+1}\|_\infty\le C \big(h^{k-\frac{3}{2}}+h^{-\frac{3}{2}}\Delta t+h^{k-\frac{1}{2}}+h^{-\frac{3}{2}}\big).
\end{align}
Therefore, $\|u_{i,h}^{L+1}\|_\infty\le C$ holds also true. Hence, by the principle of mathematical induction, $\|u_{i,h}^n\|_\infty\le C$ holds true  $\forall n\in\mathbb{N}\cup\{0\}$.
\end{proof}

\section{Numerical tests}\label{numerical-experiments}
In this section, we perform several numerical experiments to support theoretical results and to explain the harvesting or stocking effect on population density from the simulated outcomes. In all the experiments, we consider a domain $\Omega=(0,1)\times (0,1)$; we also use $\mathbb{P}_2$ element for the finite element computation, and structured triangular meshes. We define the average energy density corresponding to a species density $u_i$ at time $t=t^n$ as $$\Bar{u}_i^n=\frac{1}{|\Omega|}\int_{\tau_h} u_{i}(t^n,\bx)d\bx.$$
The experiment that involves the second-order accurate DBDF-2 scheme uses the first-order accurate DBE scheme at the first time-step to generate the required number of initial conditions.

In the first experiment, we numerically verified the predicted convergence rates. We observed the evolution of population density with an exponentially varying carrying capacity in the second experiment. In the third experiment, we observed the effect of diffusion parameters on population density. The effect of harvesting and stocking on the evolution of species density is presented in the fourth and fifth experiments, respectively. The numerical experiments were done in the finite element platform Freefem++ \cite{MR3043640} using the direct solver UMFPACK \cite{davis2004algorithm}.

\begin{table}[h]
    \begin{center}
    \begin{tabular}{|l|p{3.4cm}|p{1.6cm}|p{1.3cm}|p{5.1cm}| }\hline
\textbf{Test}      
& \textbf{Description}   
& \textbf{Carrying Capacity}& \textbf{Growth Rate}& \textbf{Additional Parameters}\\\hline
\multirow{4}{*}{1}
& \multirow{4}{*}{Verify the convergence}\vspace{-1.5ex}
\multirow{5}{*}{rates}
& \multirow{4}{*}{Periodic}& \multirow{4}{*}{Periodic} & $N=2$, $\mu_1=0.001$, $\mu_2=0.0006$, $d_1=d_2=1$\\\cline{5-5}
& & & &$N=3$, $\mu_1=0.001$, $\mu_2=0.0006$, $\mu_3=0.0$, $d_1=d_2=d_3=1$\\\hline
\multicolumn{5}{|c|}{\textbf{Effect on population density of varying}}\\\hline
2      
& spatio-temporal carrying capacity                         
& Gaussian-periodic & Constant&$N=3$, $\mu_1=0.0009$, $\mu_2=0.0015$, $\mu_3=0.0027$, $d_1=d_2=d_3=1$  \\\hline
3       
& diffusion speed 
&Gaussian-periodic& Periodic &$N=3$, $\mu_1=\mu_2=\mu_3=0.0$ \\\hline
4 &
 harvesting parameters
& Gaussian-periodic&Periodic &\multirow{2}{*}{$N=3$, $d_1=0.1$, $d_2=0.02$,}\vspace{-1.3ex}
\multirow{3}{*}{$d_3=0.01$} \\\cline{1-4}
5 & stocking parameters & Gaussian-periodic& Periodic&  \\\hline
    \end{tabular}
    \end{center}
    \caption{A brief summary of the numerical experiments where $\mu_i$ is the harvesting coefficient and $d_i$ is the diffusion speed of the $i^{th}$ competing species.}
\end{table}

\subsection{Test 1: Convergence rate verification} 
We define the global error $e_i:=u_i-u_{i,h}$ and its $L^2$-$H^1$ norm as $\|e_i\|_{2,1}:=\|e_i\|_{L^2\big(0,T;H^1(\Omega)^d\big)}$. We have seen from the convergence analysis that the predicted error of the Algorithm \ref{Algn1} and \ref{Algn2} and  for $\mathbb{P}_2$ finite element are
\begin{align}
    \|u_i-u_{i,h}\|_{2,1}&\le C(h^2+\Delta t),\hspace{2mm}\text{and}\label{error-estimate-BE}\\
    \|u_i-u_{i,h}\|_{2,1}&\le C(h^2+\Delta t^2),\hspace{1mm}i=1,2,\cdots,N,\label{error-estimate-BDF-2}
\end{align}
respectively. To verify the above convergence rates, we plugin the following carrying capacity and intrinsic growth rates
\begin{align*}
    K(t,\bx)=((2.1+\cos(x)\cos(y))(1.1+\cos(t)),\hspace{1mm}\text{and}\hspace{1mm}
    r_i(t,\bx)=(1.5+\sin(x)\sin(y))(1.2+\sin(t)),
\end{align*} respectively, in 
\begin{align}
    f_i(t,\bx)=\frac{\partial u_i}{\partial t}-d_i\Delta u_i-r_iu_i\left(1-\mu_i-\frac{1}{K}\sum\limits_{j=1}^N u_j\right),\label{forcing-function}
\end{align}
to obtain the forcing $f_i(t,\bx)$, for $i=1,2,\cdots,N$. For this experiment, we consider known analytical solution as the Dirichlet boundary condition on the boundary of the unit square, the diffusion coefficients are $d_i=1$, for $i=1,2,\cdots, N$. To observe the spatial convergence rates, we keep fixed, a small simulation end time $T$, successively reduce mesh size $h$ and run the simulations, and record the errors. On the other hand, to exhibit the temporal convergence, we use a fixed small mesh size $h$, successively refined time-step size $\Delta t$ and run the simulation, and record the errors.

\subsubsection{Two-species competition model}
In this case, we have $N=2$, and consider the  following analytical solution
\begin{align*}
    u_1(t,\bx) &= \big(1.1 + \sin(t)\big)(2.0 + \sin(y)),\\
    u_2(t,\bx) &=  \big(2.0 + \cos(t)\big)\big(1.1+\cos(x)\big),
\end{align*}
together with the harvesting coefficients $\mu_1=0.001$, and $\mu_2=0.0006$. To compute the spatial errors and convergence rates, we consider end time $T=0.0001$, and time-step size $\Delta t=T/8$. The spatial errors and convergence rates for both the DBE and DBDF-2 schemes are given in Table \ref{spatial-convergence-N-2}. We observe second order spatial convergence in both species from both the algorithms, which are consistent with \eqref{error-estimate-BE} and \eqref{error-estimate-BDF-2}, since we have used $\mathbb{P}_2 $ element. For the temporal convergence rate, we keep fixed $T=1$, and $h=1/64$, and present the temporal errors and convergence rates in Table \ref{temporal-convergence-N-2}. It is observed the first order temporal convergence rate from the DBE scheme, which is optimal rate for the backward-Euler time-stepping algorithm, and is an excellent agreement with the error estimate in \eqref{error-estimate-BE}. Recall that we used the backward-Euler formula to approximate the time derivative. Whereas, we observe a second order temporal convergence rate for the DBDF-2 scheme, which is also optimal as we approximate the temporal derivative by the BDF-2 formula, and is consistent with \eqref{error-estimate-BDF-2}.

\begin{center}
\begin{table}[!ht]
	\begin{center}
		\small\begin{tabular}{|c|c|c|c|c|c|c|c|c|}\hline
			\multicolumn{9}{|c|}{Errors and convergence rates (fixed $T=0.0001$, $\Delta t=T/8$)}\\\hline
			&\multicolumn{4}{c|}{DBE scheme}&\multicolumn{4}{c|}{DBDF-2 scheme}\\\hline
			$h$ & $\|e_1\|_{2,1}$ & rate   &$\|e_2\|_{2,1}$ & rate  &$\|e_1\|_{2,1}$ & rate   &$\|e_2\|_{2,1}$ & rate\\ \hline
			$1/4$ & 2.1868e-05 &  & 3.6307e-05 &  &2.0456e-05 &&   3.3962e-05 &\\\hline
			$1/8$ & 5.4640e-06 &2.00& 9.1106e-06 & 1.99&5.1111e-06& 2.00&    8.5221e-06&1.99\\\hline
			$1/16$ & 1.3658e-06 & 2.00 & 2.2798e-06 &2.00&1.2776e-06  &2.00&  2.1325e-06&2.00\\\hline
			$1/32$ & 3.4144e-07 & 2.00 & 5.7009e-07 & 2.00&3.1939e-07&2.00&    5.3327e-07&2.00\\\hline
			$1/64$ & 8.5360e-08& 2.00 & 1.4253e-07& 2.00&7.9848e-08 &2.00&   1.3333e-07&2.00\\\hline
		\end{tabular}
	\end{center}
	\caption{\footnotesize Two-species model: Spatial errors and convergence rates with $\mu_1=0.001$, and $\mu_2=0.0006$.} \label{spatial-convergence-N-2}
\end{table}
\end{center}

\begin{table}[!ht]
	\begin{center}
		\small\begin{tabular}{|c|c|c|c|c|c|c|c|c|}\hline
			\multicolumn{9}{|c|}{Errors and convergence rates (fixed $T=1$, $h = 1/64$)}\\\hline
			&\multicolumn{4}{c|}{DBE scheme}&\multicolumn{4}{c|}{DBDF-2 scheme}\\\hline
			$\Delta t$ & $\|e_1\|_{2,1}$ & rate   &$\|e_2\|_{2,1}$ & rate  &$\|e_1\|_{2,1}$ & rate   &$\|e_2\|_{2,1}$ & rate\\ \hline
			$\frac{T}{4}$ & 4.9154e-02 &  & 6.5044e-02 & & 2.3061e-01 &&   3.3998e-01    &\\ \hline
			$\frac{T}{8}$ & 2.3436e-02 & 1.07 & 3.2022e-02 & 1.02&  5.1569e-02  &2.16&  8.3617e-02 &2.02\\\hline
			$\frac{T}{16}$ & 1.1468e-02 & 1.03 & 1.5931e-02 & 1.01& 1.2773e-02  &2.01&  2.1564e-02  &1.96\\\hline
			$\frac{T}{32}$ & 5.6853e-03 & 1.01 & 7.9691e-03 & 1.00 & 3.2627e-03 &1.97&   5.6131e-03 & 1.94\\\hline
			$\frac{T}{64}$ & 2.8339e-03 & 1.00 & 3.9917e-03 & 1.00 &8.3346e-04  &1.97&  1.4477e-03 & 1.96\\\hline
			$\frac{T}{128}$ & 1.4155e-03 & 1.00 & 1.9989e-03 & 1.00  &2.1203e-04 &1.97& 3.6976e-04 &1.97\\\hline
		\end{tabular}
	\end{center}
	\caption{\footnotesize Two-species model: Temporal errors and convergence rates with $\mu_1=0.001$, and $\mu_2=0.0006$.}\label{temporal-convergence-N-2}
\end{table}

\subsubsection{Three-species competition model}
In this case, we have $N=3$, and consider the following manufactured analytical solution
\begin{align*}
    u_1(t,\bx) &= \big(1.1 + \sin(t)\big)(2.0 + \sin(y)),\\
    u_2(t,\bx) &=  \big(2.0 + \cos(t)\big)\big(1.1+\cos(x)\big),\\
    u_3(t,\bx)&=\big(1.1+\sin(t)\big)\big(1.1+\cos(y)\big),
\end{align*}
together with the harvesting coefficients $\mu_1=0.001$, $\mu_2=0.0006$, and $\mu_3=0.0$.
We then compute the solution using the both Algorithm \ref{Algn1} and \ref{Algn2} and compare them with the manufactured analytical solution.  

\begin{center}
\begin{table}[!ht]
	\begin{center}
		\small\begin{tabular}{|c|c|c|c|c|c|c|}\hline
			\multicolumn{2}{|c}{DBE scheme}&\multicolumn{5}{|c|}{Errors and convergence rates (fixed $T=0.0001$, $\Delta t=T/8$)}\\\hline
			$h$ & $\|e_1\|_{2,1}$ & rate   &$\|e_2\|_{2,1}$ & rate &$\|e_3\|_{2,1}$ & rate \\ \hline
			$\frac{1}{4}$ & 2.1868e-05 &  & 3.6307e-05 & & 1.3313e-05&  \\\hline
			$\frac{1}{8}$ & 5.4640e-06 &2.00& 9.1106e-06 & 1.99& 3.3407e-06& 1.99\\\hline
			$\frac{1}{16}$ & 1.3658e-06 & 2.00 & 2.2798e-06 &2.00& 8.3597e-07&2.00\\\hline
			$\frac{1}{32}$ & 3.4144e-07 & 2.00 & 5.7009e-07 & 2.00& 2.0904e-07&2.00\\\hline
			$\frac{1}{64}$ & 8.5364e-08& 2.00 & 1.4254e-07& 2.00& 5.2268e-08& 2.00\\\hline
		\end{tabular}
	\end{center}
	\caption{\footnotesize Three-species model: Spatial errors and convergence rates with $\mu_1=0.001$, $\mu_2=0.0006$, and $\mu_3=0.0$.} \label{spatial-convergence-N-3}
\end{table}
\end{center}

\begin{center}
\begin{table}[!ht]
	\begin{center}
		\small\begin{tabular}{|c|c|c|c|c|c|c|}\hline
			\multicolumn{2}{|c}{DBDF-2 scheme}&\multicolumn{5}{|c|}{Errors and convergence rates (fixed $T=0.001$, $\Delta t=T/16$)}\\\hline
			$h$ & $\|e_1\|_{2,1}$ & rate   &$\|e_2\|_{2,1}$ & rate &$\|e_3\|_{2,1}$ & rate \\ \hline
			$\frac{1}{4}$ &   6.6986e-05 &&   1.1115e-04 && 4.0774e-05&\\\hline
			$\frac{1}{8}$ & 1.6738e-05 &2.00&   2.7893e-05 &1.99& 1.0233e-05&1.99\\\hline
			$\frac{1}{16}$ &4.1839e-06 & 2.00&   6.9804e-06 &2.00& 2.5608e-06&2.00 \\\hline
			$\frac{1}{32}$ & 1.0460e-06 &2.00&   1.7457e-06 &2.00& 6.4041e-07& 2.00\\\hline
			$\frac{1}{64}$ &2.6174e-07  &2.00&  4.3698e-07 &2.00& 1.6034e-07 &2.00\\\hline
		\end{tabular}
	\end{center}
	\caption{\footnotesize Three-species model: Spatial errors and convergence rates with $\mu_1=0.001$, $\mu_2=0.0006$, and $\mu_3=0.0$.} \label{spatial-convergence-N-3-BDF2}
\end{table}
\end{center}

\begin{table}[!ht]
	\begin{center}
		\small\begin{tabular}{|c|c|c|c|c|c|c|}\hline
			\multicolumn{2}{|c}{DBE scheme}&\multicolumn{5}{|c|}{Temporal convergence (fixed $h = 1/64$)}\\\hline
			$\Delta t$ & $\|e_1\|_{2,1}$ & rate   &$\|e_2\|_{2,1}$ & rate &$\|e_3\|_{2,1}$ & rate  \\ \hline
			$\frac{T}{4}$ & 2.1742e-01   &      &  3.1934e-01 & &  1.6932e-01  & \\ \hline
			$\frac{T}{8}$ & 9.9975e-02  & 1.12 &  1.5053e-01 & 1.09 &  7.7848e-02 & 1.12 \\\hline
			$\frac{T}{16}$ & 4.8267e-02 & 1.05 &  7.3538e-02 & 1.03 &  3.7585e-02 & 1.05\\\hline
			$\frac{T}{32}$ & 2.3770e-02 & 1.02 &  3.6446e-02 & 1.01 &  1.8510e-02 &1.02 \\\hline
			$\frac{T}{64}$ & 1.1805e-02 & 1.01 &  1.8163e-02& 1.00&   9.1929e-03 &1.01\\\hline
			$\frac{T}{128}$ & 5.8844e-03& 1.00 & 9.0705e-03& 1.00 &   4.5823e-03 & 1.00\\\hline
		\end{tabular}
	\end{center}
	\caption{\footnotesize Three-species model: Temporal errors and convergence rates with $\mu_1=0.001$, $\mu_2=0.0006$, and $\mu_3=0.0$.}\label{temporal-convergence-N3-BE}
\end{table}

\begin{table}[!ht]
	\begin{center}
		\small\begin{tabular}{|c|c|c|c|c|c|c|}\hline
			\multicolumn{2}{|c}{DBDF-2 scheme}&\multicolumn{5}{|c|}{Temporal convergence (fixed $h = 1/128$)}\\\hline
			$\Delta t$ & $\|e_1\|_{2,1}$ & rate   &$\|e_2\|_{2,1}$ & rate &$\|e_3\|_{2,1}$ & rate  \\ \hline
			$\frac{T}{4}$ &  4.2702e-01  &&  6.4799e-01 && 3.3294e-01 &\\ \hline
			$\frac{T}{8}$ & 8.8706e-02 & 2.27 & 1.5471e-01 & 2.07 & 6.8771e-02& 2.28\\\hline
			$\frac{T}{16}$ & 2.1412e-02 & 2.05&   3.9362e-02 & 1.97& 1.6557e-01 &2.05\\\hline
			$\frac{T}{32}$ & 5.4327e-03 & 1.98& 1.0206e-02 & 1.95& 4.1909e-03 & 1.98\\\hline
			$\frac{T}{64}$ & 1.3844e-03  & 1.97&  2.6272e-03 & 1.96&  1.0655e-03 & 1.98\\\hline
			$\frac{T}{128}$ & 3.5123e-04 &1.98& 6.6970e-04 &1.97& 2.6976e-04 & 1.98\\\hline
		\end{tabular}
	\end{center}
	\caption{\footnotesize Three-species model: Temporal errors and convergence rates with $\mu_1=0.001$, $\mu_2=0.0006$, and $\mu_3=0.0$.}\label{temporal-convergence-N3-BDF2}
\end{table}

The spatial errors and convergence rates for the Algorithm \ref{Algn1}, and Algorithm \ref{Algn2} are given in Table \ref{spatial-convergence-N-3} in Table \ref{spatial-convergence-N-3-BDF2}, respectively. We observe second order spatial convergence in both algorithms for all species, which is consistent with both \eqref{error-estimate-BE} and \eqref{error-estimate-BDF-2}. The temporal errors and convergence rates for the Algorithm \ref{Algn1}, and Algorithm \ref{Algn2} are presented in Table \ref{temporal-convergence-N3-BE} and in Table \ref{temporal-convergence-N3-BDF2}, respectively. From Table \ref{temporal-convergence-N3-BE} we see, the DBE scheme exhibits first order temporal convergence rate, which is optimal rate as a backward-Euler time-stepping algorithm, and is an excellent agreement with the error estimate in \eqref{error-estimate-BE}. 

On the other hand, DBDF-2 scheme displays second order temporal convergence for all three species in Table \ref{temporal-convergence-N3-BDF2}, which is also optimal for the second order time-stepping algorithm and is consistent with the error estimate in \eqref{error-estimate-BDF-2}. Therefore, for both two and three species, we observe optimal convergence rates.

In all the experiments henceforward, we consider the carrying capacity $$K(t,\bx)\equiv\big(1.2+2.5\pi^2e^{-(x-0.5)^2-(y-0.5)^2}\big)\big(1.0+0.3\cos(t)\big),$$ initial population density $u_i^0=1.6$, forcing functions $f_i\equiv 0$ for $i=1,2,3$, and solve \eqref{RDE1} using the DBDF-2 scheme given in Algorithm \ref{Algn2} with time-step size $\Delta t=0.1$ along with no-flux boundary condition. The no-flux boundary condition ensures the competing species live in a closed environment.

\subsection{Test 2: Evolution of population density with exponentially varying carrying capacity}\label{exp-2}

For this experiment, we consider constant intrinsic growth rates $r_i\equiv 1$, diffusion rates $d_i=1$, for $i=1,2,3$, and the harvesting coefficients $\mu_1=0.0009$, $\mu_2=0.0015$, and $\mu_3=0.0027$. 
\begin{figure} [ht]
		\centering
		
		\subfloat[]{\includegraphics[width=0.5\textwidth,height=0.3\textwidth]{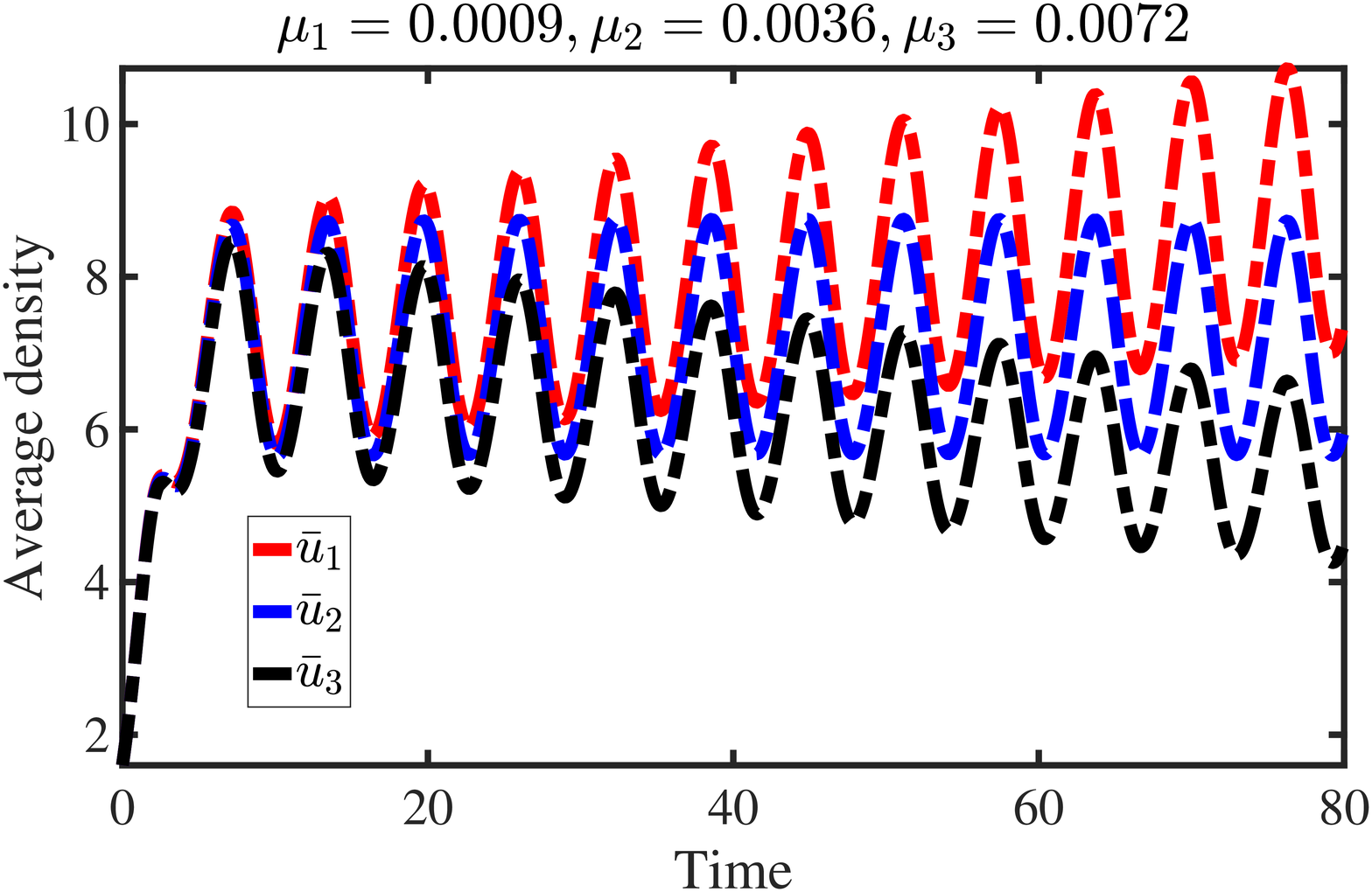}}
		\subfloat[]{\includegraphics[width=0.5\textwidth,height=0.3\textwidth]{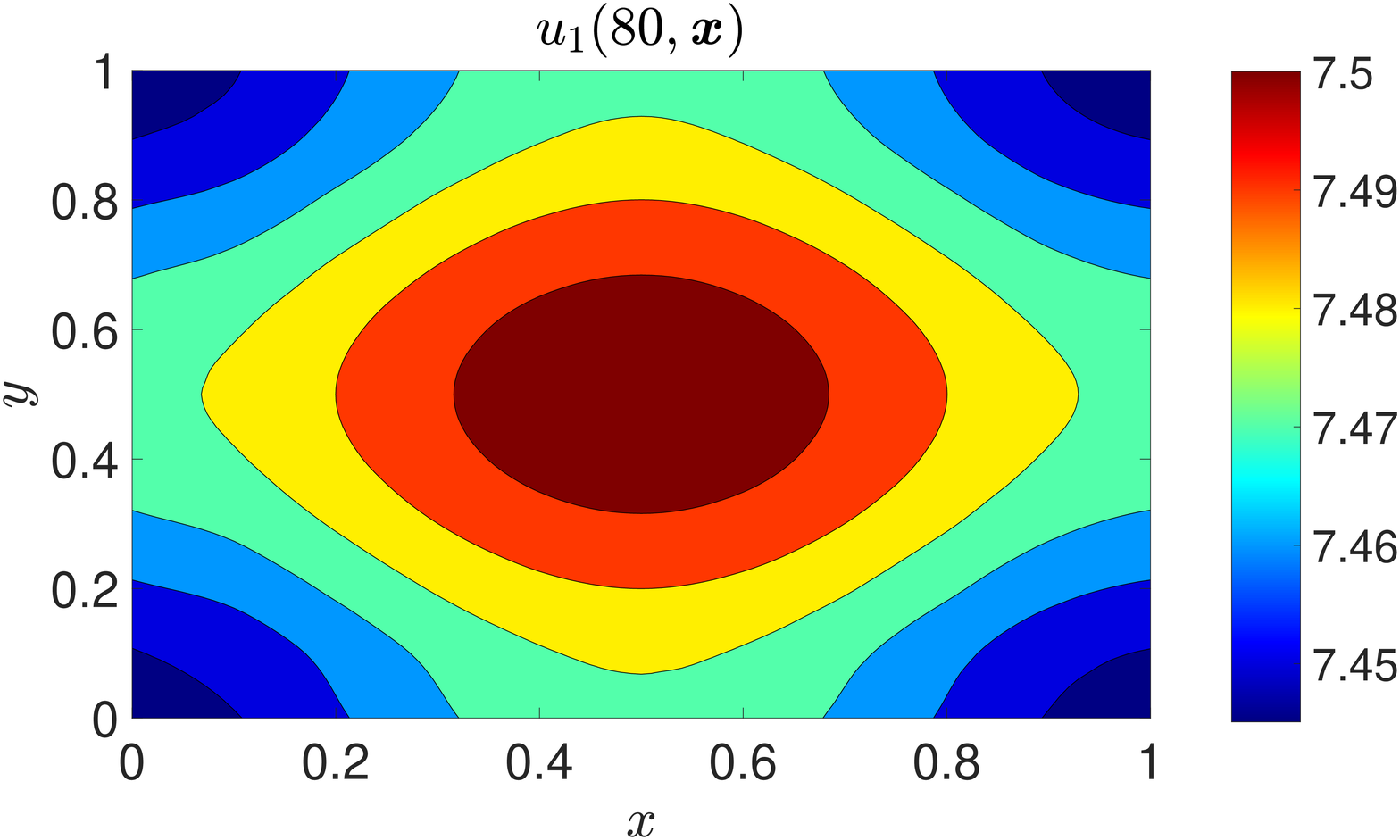}}\\
		
		\subfloat[]{\includegraphics[width=0.5\textwidth,height=0.3\textwidth]{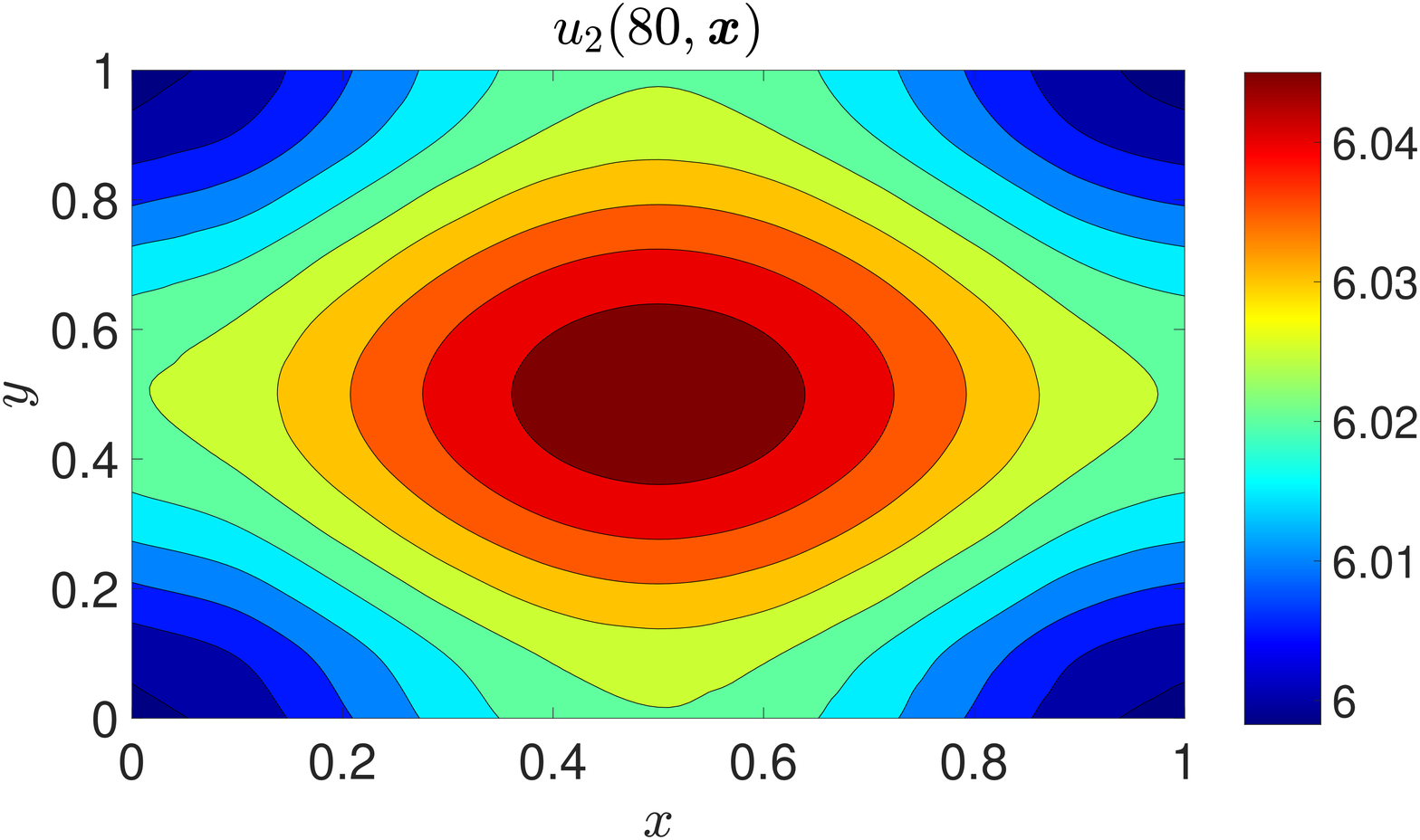}}
		\subfloat[]{\includegraphics[width=0.5\textwidth,height=0.3\textwidth]{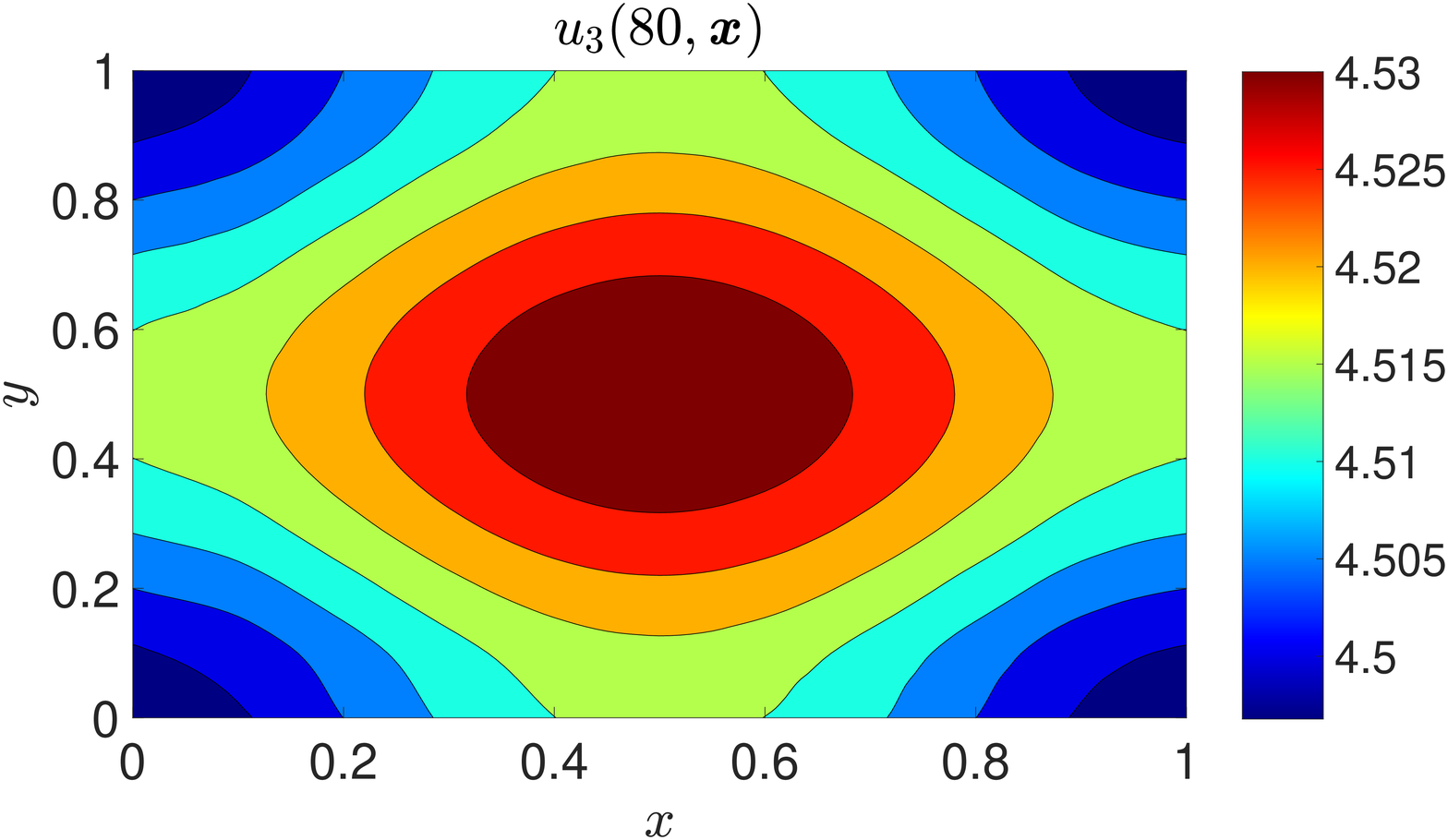}}
		\caption{ (a) Average density of each species,   (b) contour plot of species density $u_1$, (c) contour plot of species density $u_2$, and (d) contour plot of species density $u_3$ with the harvesting coefficients $\mu_1=0.0009$, $\mu_2=0.0015$, and $\mu_3=0.0027$  at $t=80$.}	\label{RDE-energy-0-0009-nu-0-0025-K-time-exp}
	\end{figure}
When carrying capacity $K$ is time periodic, as it is realistic to assume where there is seasonal variation, we display the space averaged profile as a function of time to show its approach to a periodic state, and display the instantaneous contour plot of $u_1,\;u_2$, and $u_3$ for $t = T$, where $ T $ is chosen large enough for time periodicity of $u_i,\;(i=1,2,3)$ to emerge.

In Figures \ref{RDE-energy-0-0009-nu-0-0025-K-time-exp}, the average density of each species versus time is plotted for time $t=0$ to $80$, and the population density contour plot of each of the species at time $t=80$. From the average density plot, we observe periodic population densities for all species, where  the density of  $u_3$ is decreasing because of its higher harvesting coefficient (Figure \ref{RDE-energy-0-0009-nu-0-0025-K-time-exp}(a)). It is predicted that the species $u_3$ will die out if time is too large, and we consider the extinction scenario in a later experiment.

From the contour plots, it is observed that the highest population density is at the point $(0.5,0.5)$ and there is a coexistence of all species, though the population density of the species $u_1$ remains bigger than the species $u_2$, and $u_3$ over the domain (Figure \ref{RDE-energy-0-0009-nu-0-0025-K-time-exp}(b)-(d)). This happens because of different harvesting parameters, and the optimal value of the carrying capacity function is achieved at the point $(0.5,0.5)$, which shows the symmetric distribution of the population.

In all the experiments henceforward unless otherwise stated, we will use the intrinsic growth rates $r_i(t,\bx)\equiv(1.5+\sin(x)\sin(y))(1.2+\sin(t)),\hspace{1mm}i=1,2,3.$

\subsection{Test 3: Diffusion speed and evolution of population density}

In this example, we consider the problem for three species populations in absence of harvesting (e.g., $\mu_i=0$). We want to see how the average population density of a species varies with the diffusion parameter. We plot the average density versus time in Figure \ref{diffusion_varies} for all three species varying the diffusion parameters as $d_i=0.01,\; 0.02$, and $0.1$ for $i=1,2,3$. Figure \ref{diffusion_varies} suggests that the initial value is unimportant to the final state due to the global convergence of solutions. It is also remarked that the same is true in other experiments.
From all three plots in Figure \ref{diffusion_varies} (a)-(c), we observe that as the diffusion parameter increases, the species density increases over time. The species with higher diffusion rate will converge to the stable solution faster \cite{braverman2015competitive}. Figure \ref{diffusion_varies} (c)-(d) are plotted for the same data, but for short and long time scenarios. We observe that the species with the highest diffusion rate is extinct, whereas the species with the lowest diffusion rate is the winner over the other species. In summary, the slow diffuser is the sole winner for multiple population competition and is independent of any choice of equal intrinsic growth rate and the initial population size.

\begin{figure} [ht]
		\centering
		\subfloat[]{\includegraphics[width=0.5\textwidth,height=0.3\textwidth]{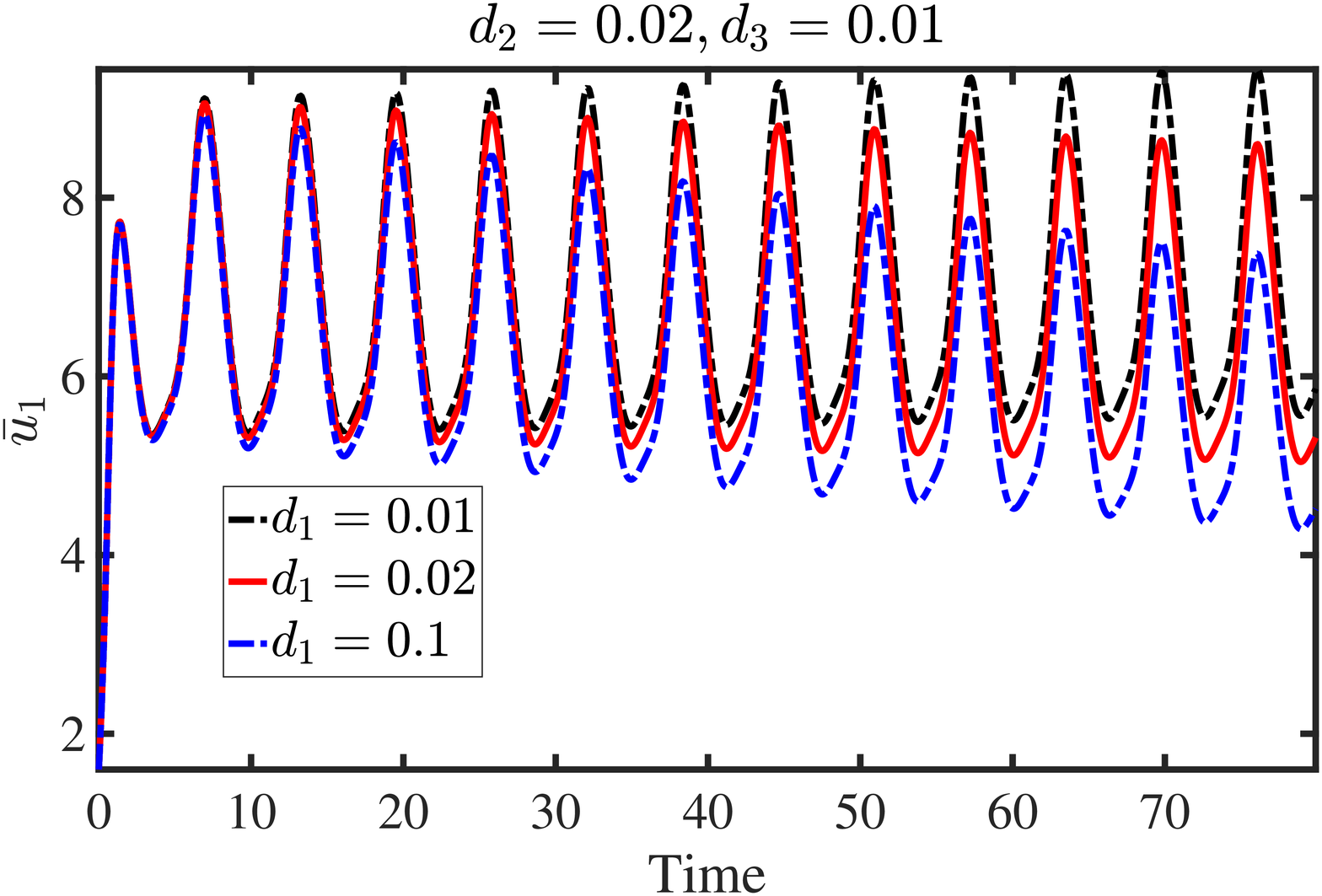}}
		\subfloat[]{\includegraphics[width=0.5\textwidth,height=0.3\textwidth]{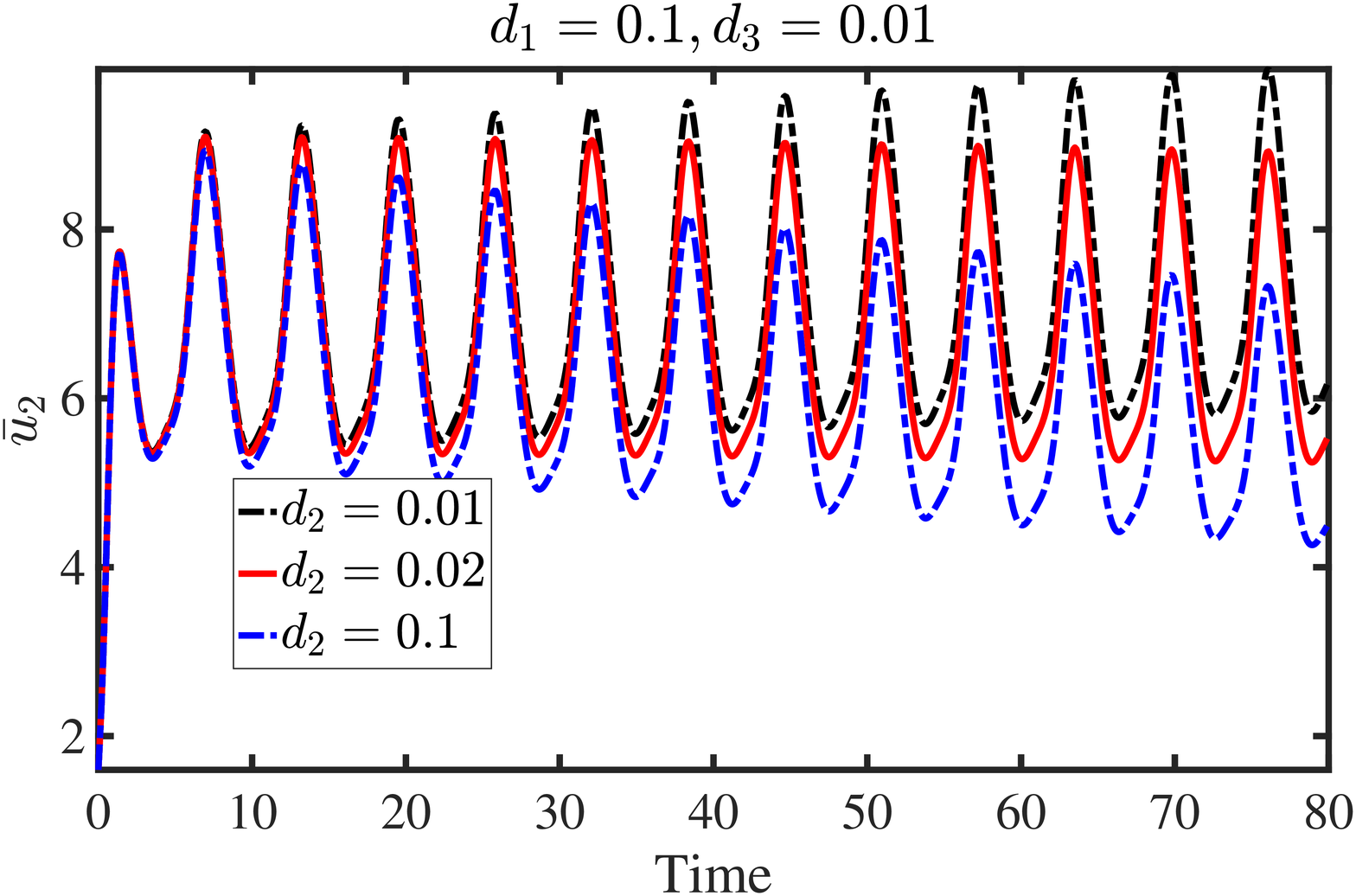}}\\
		\subfloat[]{\includegraphics[width=0.5\textwidth,height=0.3\textwidth]{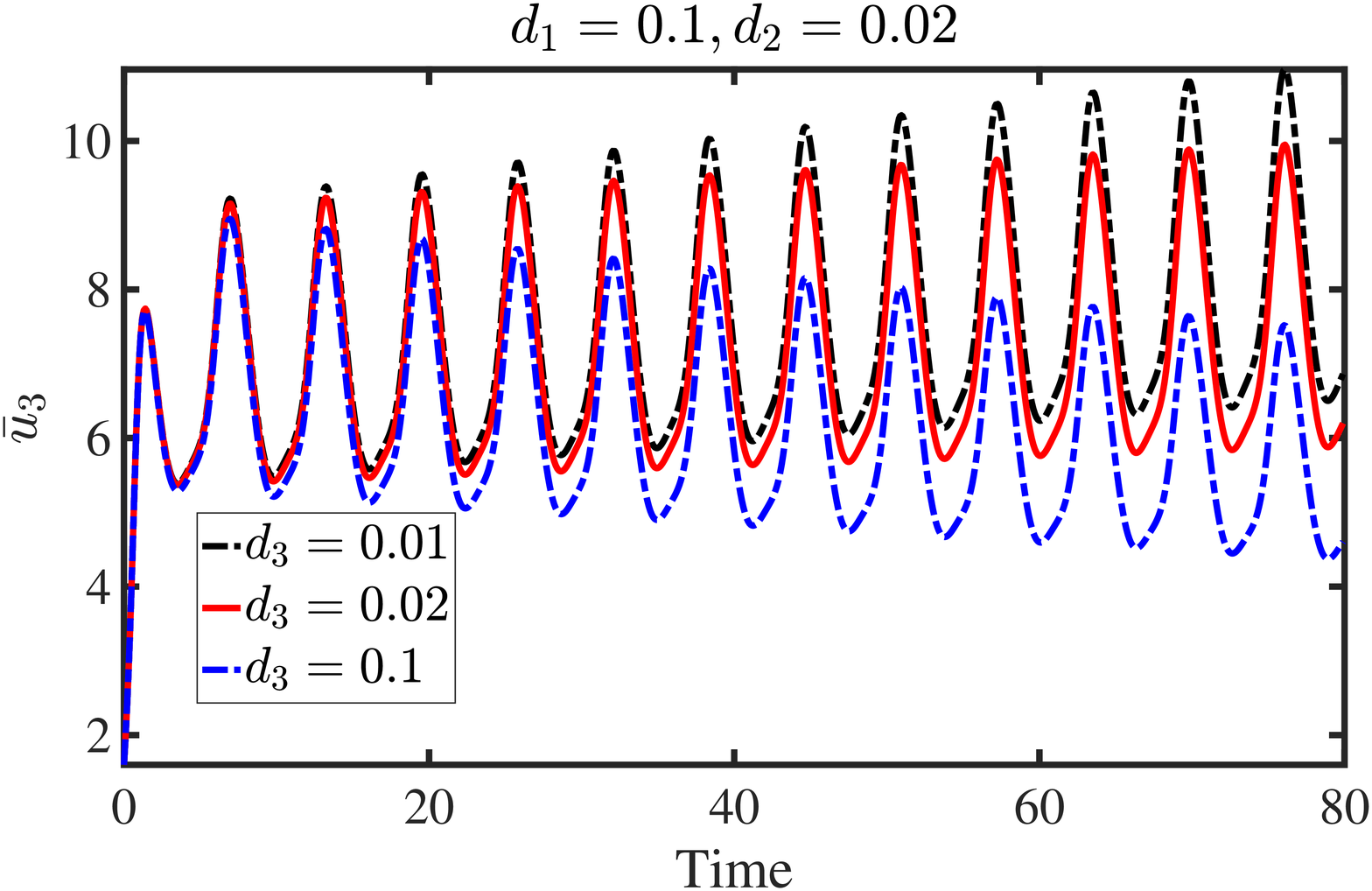}}
		\subfloat[]{\includegraphics[width=0.5\textwidth,height=0.3\textwidth]{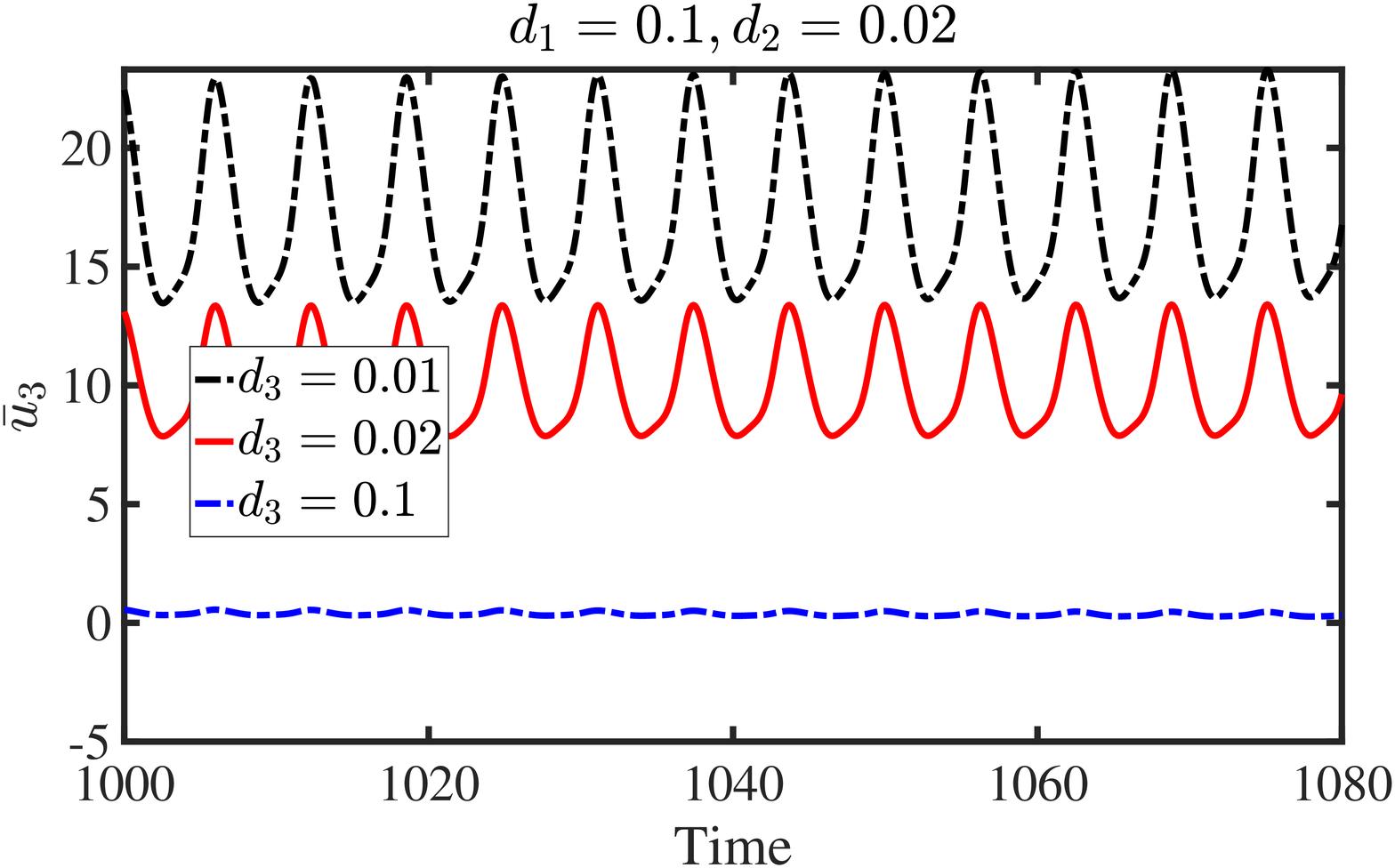}}
		\caption{The effect of diffusion rate on the average population density (a) $\bar{u}_1$, (b) $\bar{u}_2$, (c) $\bar{u}_3$, and (d) $\bar{u}_3$ on long-range without harvesting or stocking effort for $K(t,\bx)\equiv\big(1.2+2.5\pi^2e^{-(x-0.5)^2-(y-0.5)^2}\big)\big(1.0+0.3\cos(t)\big),\;
		\mu_i=0$ and $r_i(t,\bx)\equiv(1.5+\sin(x)\sin(y))(1.2+\sin(t)),\hspace{1mm}i=1,2,3.$}	\label{diffusion_varies}
	\end{figure}
	
\subsection{Test 4: Effect of harvesting on the evolution of population density}

In this experiment, we consider $N=3,$ a three species competition model with varying diffusion rates as $d_1=0.1$, $d_2=0.02$, and $d_3=0.01$. That is, the spreading rate of the first and third species is the highest and lowest, respectively. Consequently, without stocking, their population density at any time must be the lowest and highest, respectively as investigated and presented in Figure  \ref{diffusion_varies}.

In Figure \ref{varying-harvesting-para}, we plot the average density of each species versus time with varying harvesting parameters over the time interval $[1000,1080]$. We choose the time interval $[1000,1080]$ to exhibit the long-range behavior of the solutions. In Figure \ref{varying-harvesting-para} (a), we plot the long-range behavior of the average density of each of the species in absence of harvesting or stocking effort ($\mu_1=\mu_2=\mu_3=0.0$). We observe a periodic behavior in the average density of all the species, and the third species dominates the other species in competition. The periodic behavior is inherited from periodic system carrying capacity of the system. The lowest diffusion rate plays a key role for the third species in becoming the winner. For time periodic parameters in Figure \ref{varying-harvesting-para} (a), we notice the average density of $u_1$ approaches zero in an oscillatory fashion.

Figure \ref{varying-harvesting-para} (b) represents the average density of each of the three species on the time interval $[1000,1080]$, where only the third species is affected by harvesting with coefficient $\mu_3=0.001$. The sequential map presented the results based on the combined effects of harvesting and diffusion coefficient. Comparing Figure \ref{varying-harvesting-para} (a) and Figure \ref{varying-harvesting-para} (b), it is clear that due to the non-zero harvesting parameter $\mu_3=0.001$, the density of the third species has been reduced. On the other hand, clearly, the second species is also impacted by the harvesting of the third species. Because of the reduction in the population density of the third species, the other species get more resources to grow, and a significant boost is observed in the second species' density and a considerable amount of density increment is observed in the first species.

If we further increase the harvesting coefficients as $\mu_2=0.001$, and $\mu_3=0.002$, but keep $\mu_1=0$ (no harvesting) and plot the average density curves versus time for each species in Figure \ref{varying-harvesting-para} (c), we observe an evolutionary population density feature, especially for the first species. The harvesting in the second and third species provides an advantage to the first species, and thus an apparent co-existence of all three species is visible over the time $[1000,1080]$. It is noted that Figure \ref{varying-harvesting-para} reveals the effects of harvesting levels on the scaled average population density on periodic time-dependent functions as happens for seasonal changes. Though the considered values of the harvesting parameters are not corresponding to the optimal co-existence, it is possible to estimate optimal $(\mu_1,\mu_2,\mu_3)$ \cite{adan2022role}.
\begin{figure} [ht]
		\centering
		\subfloat[]{\includegraphics[scale=.17]{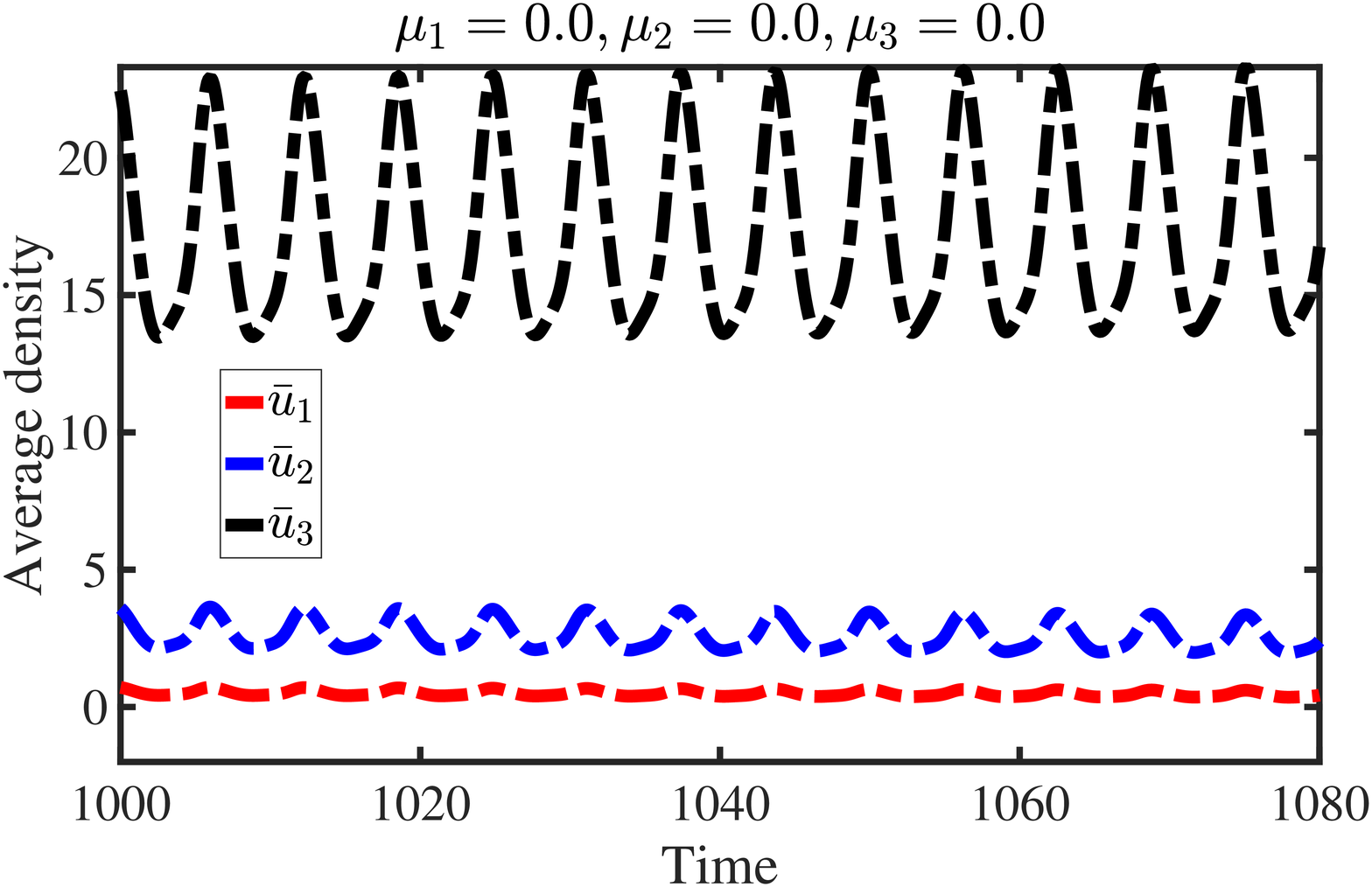}}
		\subfloat[]{\includegraphics[scale=.17]{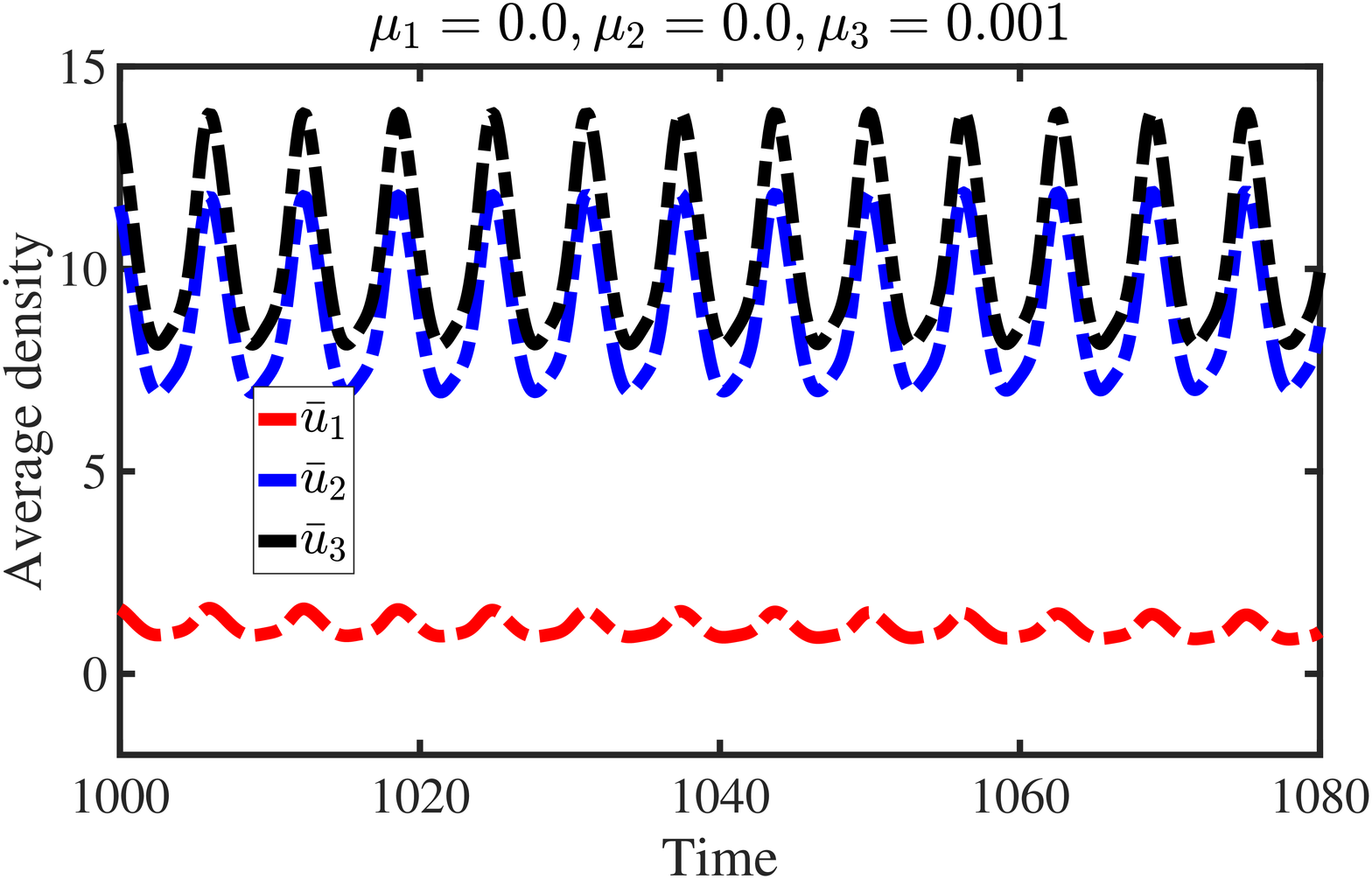}}
		\\
		\subfloat[]{\includegraphics[scale=.17]{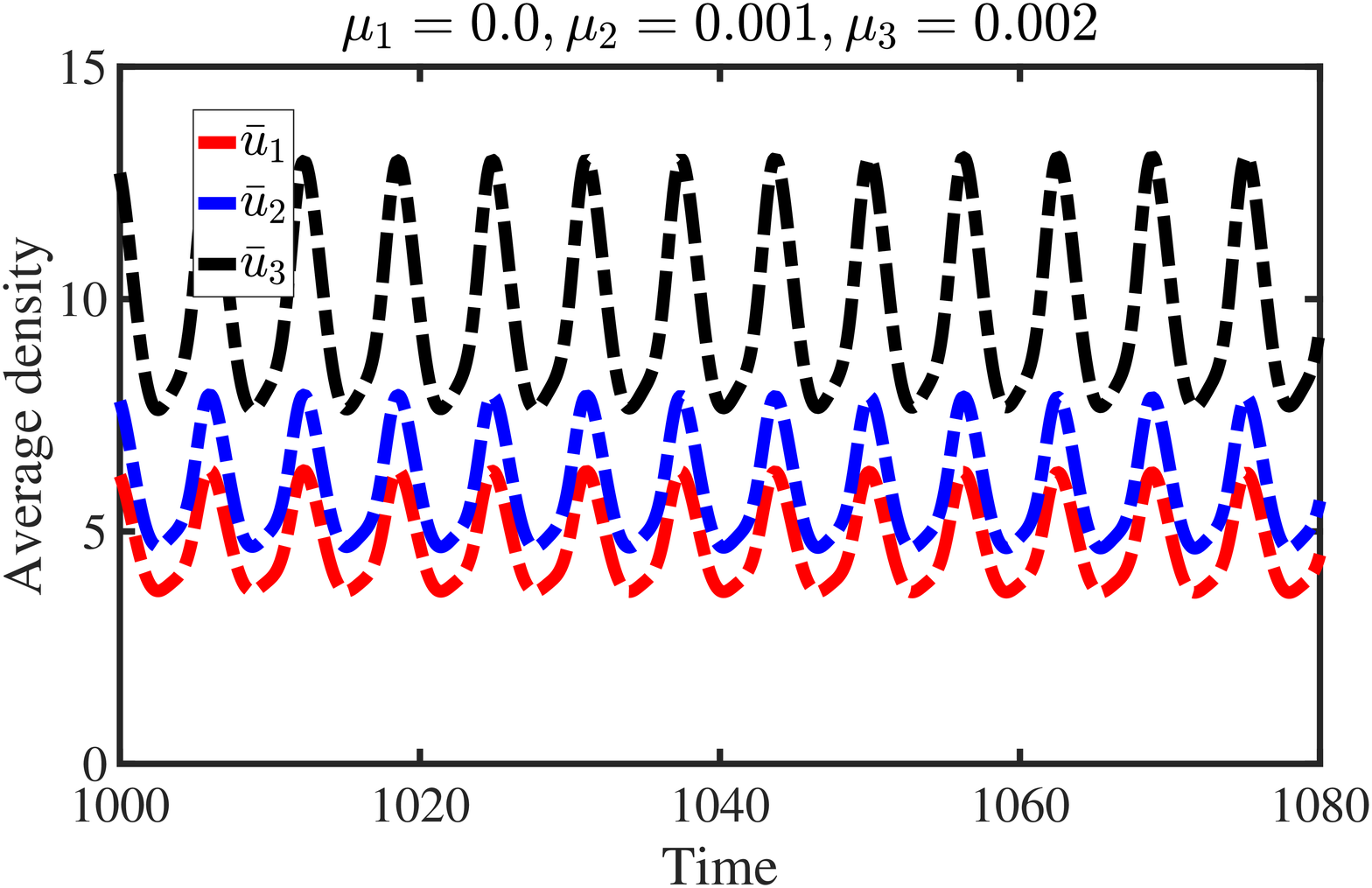}}
		\caption{The effect of harvesting parameters on the average density with diffusion parameters $d_1=0.1$, $d_2=0.02$, and $d_3=0.01$ for $K(t,\bx)\equiv\big(1.2+2.5\pi^2e^{-(x-0.5)^2-(y-0.5)^2}\big)\big(1.0+0.3\cos(t)\big),$ and $r_i(t,\bx)\equiv(1.5+\sin(x)\sin(y))(1.2+\sin(t)),\hspace{1mm}i=1,2,3.$ with (a) No harvesting, (b) third population harvested and (c) $u_2,\;u_3$ are harvesting.}\label{varying-harvesting-para}
	\end{figure}

\subsection{Test 5: Effect of stocking on the evolution of population density}

In this experiment, we observed how the evolution of population density is affected by the variation in stocking parameters. We plot the average density corresponding to each species versus time varying the stocking parameters in Figure \ref{varying-stocking-para} on the time interval $[1000,1080]$. Because of the periodic resource function, we observe periodic behavior in all the population densities (Figure \ref{varying-stocking-para}). We consider the same diffusion rates, $d_1=0.1$, $d_2=0.02$, and $d_3=0.01$ as in the case of Figure \ref{varying-harvesting-para} (a), where no harvesting or stocking is considered. 

\begin{figure} [ht]
		\centering
		\subfloat[]{\includegraphics[scale=.17]{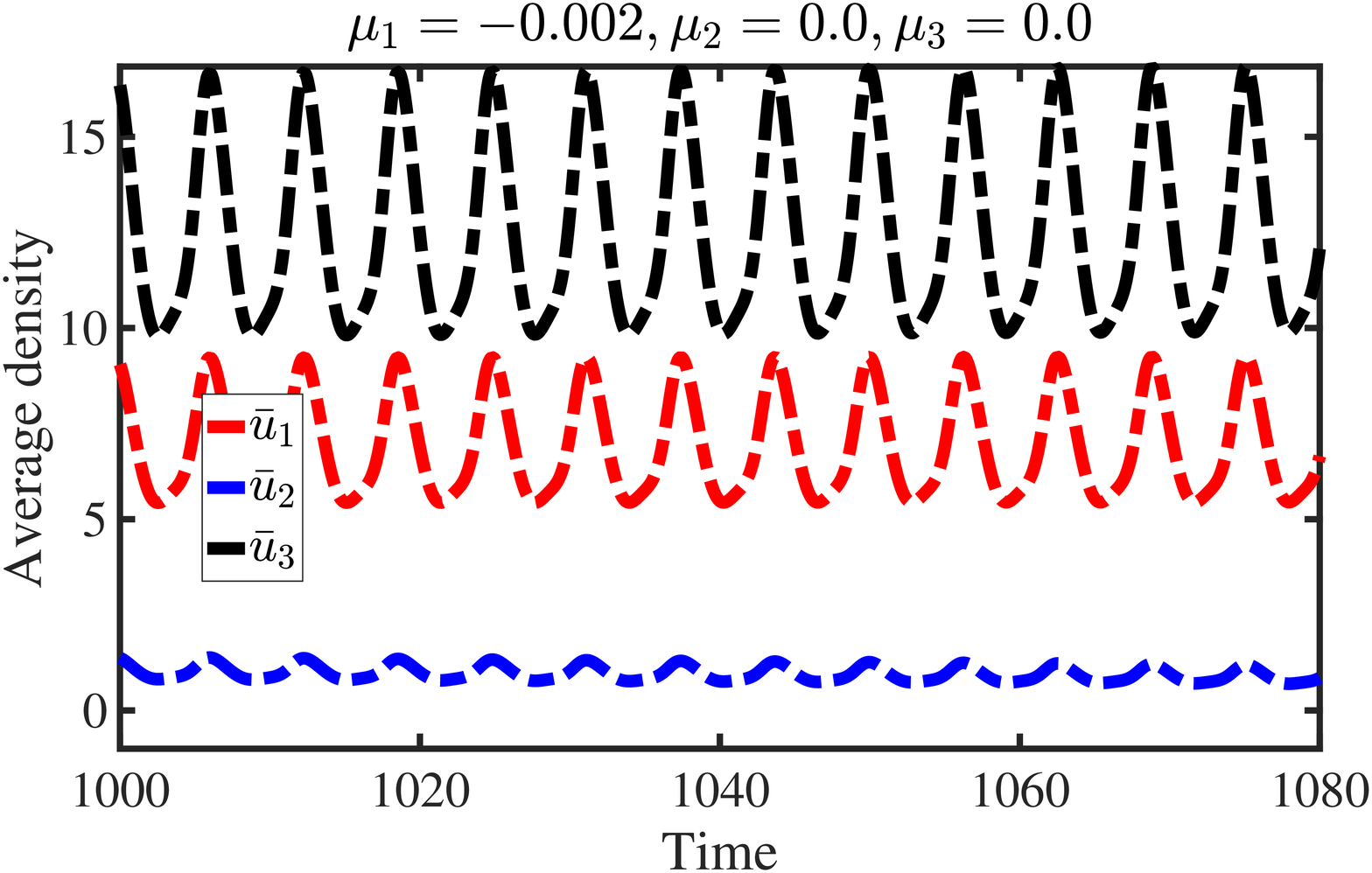}}
		\subfloat[]{\includegraphics[scale=.17]{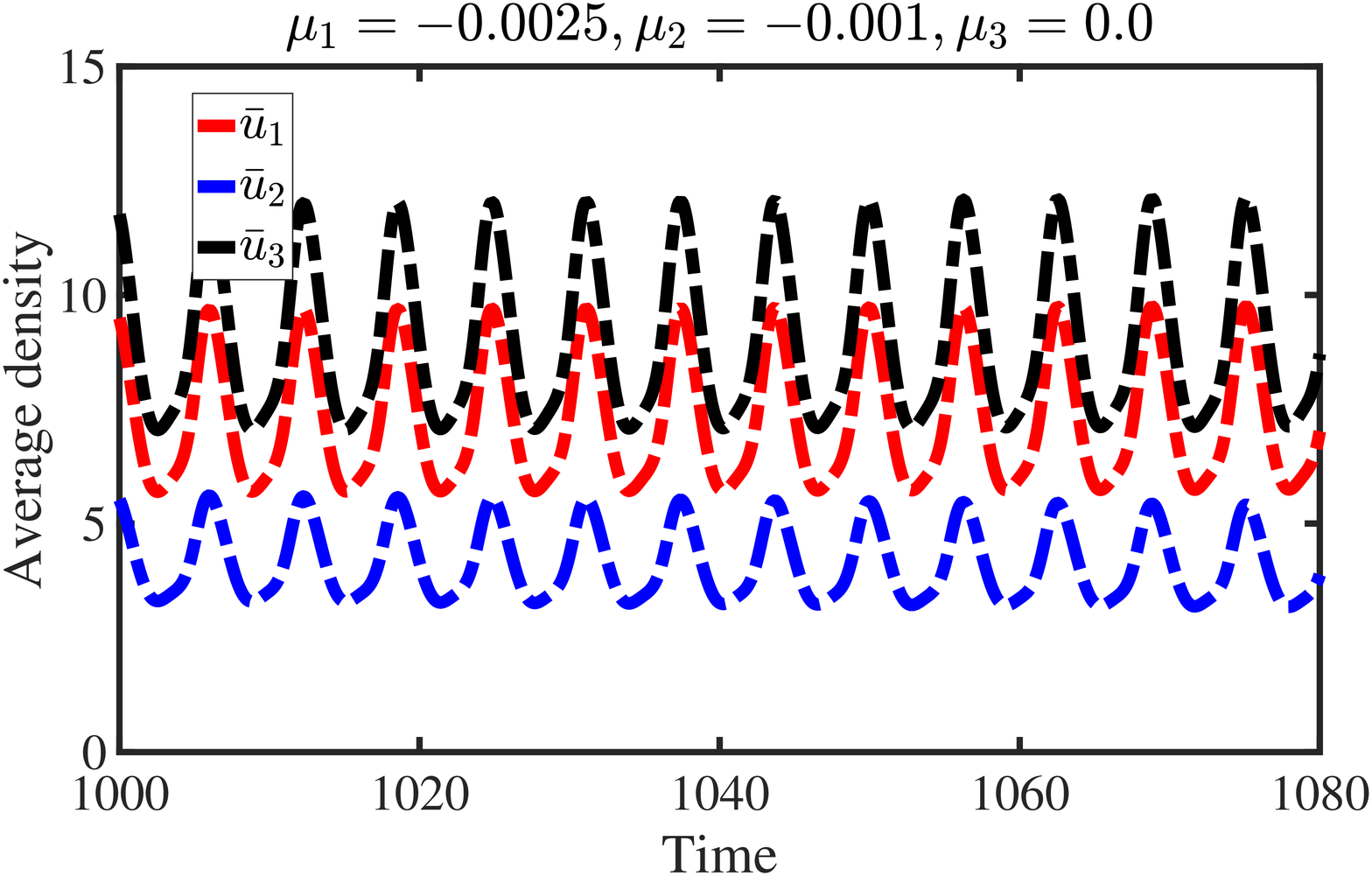}}
		\caption{Effect of stocking parameters on the system energy corresponding to each species density with the diffusion parameters $d_1=0.1$, $d_2=0.02$, and $d_3=0.01$ for $K(t,\bx)\equiv\big(1.2+2.5\pi^2e^{-(x-0.5)^2-(y-0.5)^2}\big)\big(1.0+0.3\cos(t)\big),$ and $r_i(t,\bx)\equiv(1.5+\sin(x)\sin(y))(1.2+\sin(t)),\hspace{1mm}i=1,2,3.$}\label{varying-stocking-para}
	\end{figure}

We reduce the  stocking parameter $\mu_1=0.0$ to $\mu_1=-0.002$ for $u_1$ since the diffusion rate is higher for the first species.
The results are displayed in Figure \ref{varying-stocking-para} (a). Comparing  Figure \ref{varying-harvesting-para} (a) and  Figure \ref{varying-stocking-para} (a), we observe that the population density of the first species increases while for the second and third species decrease. It provides the increased population of the first species consumes more resources from the environment, which reduces the productivity of the other two species.

Next, we decrease the stocking parameter of the first species to $\mu_1=-0.0025$ and for the second species to $\mu_2=-0.001$, keeping no harvesting or stocking to the third species, and plot the average density in Figure \ref{varying-stocking-para} (b). We observe that the density of the third species decreases while the first and second species increase, and there is a transparent co-existence of all the species. Therefore, the harvesting and stocking parameters can be a controlling tool in population dynamics to optimize the limited resources.

 
\section{Conclusion and Future Research}\label{conclusion}
Time evolutionary reaction-diffusion equation is the basis of harvesting and/or stocking model in population dynamics. In this paper, we propose a time-dependent system of non-linear coupled partial differential equations representing the dynamics of an $N$-species competition model with harvesting and/or stocking effect. We also propose, analyze and test two fully discrete decoupled stable algorithms for numerical computation. We have proven the first scheme is first-order accurate and the second scheme is second-order accurate in time and both are optimally accurate in space. Numerical tests verify the predicted convergence rates with some analytical test problems for both two- and three-species competition models. The linearized decoupled algorithms are efficient as at each time-step the solution for each species can be computed simultaneously, which can save a huge computational time in large-scale computing with complex problems.  

Numerical experiments exhibit (a) if the diffusion rate increases the population density decreases and faster the process of extinction, (b) if the harvesting parameter of a species increases, its density decreases, and other species get benefit in the competition, and (c) if the stocking effect of a species intensify, the population density increases and tends to win over the other species in the competition, that is, the survival period of the species increases. For a particular competition model, a set of values of the harvesting or stocking parameters can be found for which co-existence of all species will be ensured. The harvesting and/or stocking parameter can be a useful controlling tool in the population dynamics with limited natural resources.

As a future research, we will propose, analyze and test an efficient decoupled algorithm for this $N$-species competition model following the works in \cite{MR17,mohebujjaman2022efficient} so that at each time-step each species solver shares a common system matrix which will further save computer memory and assembling time of system matrix.

\bibliographystyle{plain}
\bibliography{Bib_Harvesting}
\section*{Appendix} \label{appendix} Here we
  find the restriction on the time-step size to have the stability of the Algorithm \ref{Algn1}. We consider the following linear system:
\begin{align}
    a(u_{i,h}^{n+1},v_{i,h})=F\left(v_{i,h}\right),\hspace{1mm}\forall v_{i,h}\in X_h,\hspace{1mm}i=1,2,\cdots,N,\label{linear-system}
\end{align}
where the linear form
\begin{align}
    F\left(v_{i,h}\right):=\frac{1}{\Delta t}\left(u_{i,h}^{n},v_{i,h}\right)+\left(f_i(t^{n+1}),v_{i,h}\right),
\end{align}
and  the  bilinear form

\begin{align}
    a\left(u_{i,h}^{n+1},v_{i,h}\right):&=\frac{1}{\Delta t}\left(u_{i,h}^{n+1},v_{i,h}\right)+d_i\left(\nabla u_{i,h}^{n+1},\nabla v_{i,h}\right)-(1-\mu_i)\left(r_i(t^{n+1})u_{i,h}^{n+1},v_{i,h}\right)\nonumber\\&-\left(\frac{r_i(t^{n+1})u_{i,h}^{n+1}}{K(t^{n+1})}\sum\limits_{j=1}^Nu_{j,h}^n,v_{i,h}
    \right).\label{bilinear-form}
\end{align}
Now, substitute $v_{i,h}=u_{i,h}^{n+1}$ in \eqref{bilinear-form} to give
\begin{align}
    a&\left(u_{i,h}^{n+1},u_{i,h}^{n+1}\right)=\frac{1}{\Delta t}\|u_{i,h}^{n+1}\|^2+d_i\|\nabla u_{i,h}^{n+1}\|^2-(1-\mu_i)\left(r_i(t^{n+1})u_{i,h}^{n+1},u_{i,h}^{n+1}\right)\nonumber\\&-\left(\frac{r_i(t^{n+1})u_{i,h}^{n+1}}{K(t^{n+1})}\sum\limits_{j=1}^Nu_{j,h}^n,u_{i,h}^{n+1}
    \right)\nonumber\\&\ge \frac{1}{\Delta t}\|u_{i,h}^{n+1}\|^2+d_i\|\nabla u_{i,h}^{n+1}\|^2-|1-\mu_i|\|r_i(t^{n+1})\|_{\infty}\|u_{i,h}^{n+1}\|^2-\frac{C\|r_i(t^{n+1})\|_{\infty}}{\inf\limits_\Omega\|K(t^{n+1})\|}\|u_{i,h}^{n+1}\|^2.
\end{align}
The last term in the above inequality is derived as the lower bound subject to the Assumption \ref{assumption-1}. Rearranging
\begin{align}
    a\left(u_{i,h}^{n+1},u_{i,h}^{n+1}\right)\ge\left(\frac{1}{\Delta t}-|1-\mu_i|\|r_i\|_{L^\infty\big(0,T;L^\infty(\Omega)^d\big)}-\frac{C\|r_i\|_{L^\infty\big(0,T;L^\infty(\Omega)^d\big)}}{\inf\limits_{(t,\bx)\in (0,T]\times\Omega}|K|}\right)\|u_{i,h}^{n+1}\|^2+d_i\|\nabla u_{i,h}^{n+1}\|^2.
\end{align}
To have the coercivity condition, we must have
\begin{align*}
    \frac{1}{\Delta t}-|1-\mu_i|\|r_i\|_{\infty,\infty}-\frac{C\|r_i\|_{\infty,\infty}}{K_{\min}}\ge 0,
\end{align*}
which gives the following restriction on the time-step size
\begin{align}
    \Delta t\le\frac{K_{\min}}{|1-\mu_i|\|r_i\|_{\infty,\infty}K_{\min}+C\|r_i\|_{\infty,\infty}}.\label{time-step-restriction}
\end{align}
In a similar approach, we can find the time-step restriction on the stability of the Algorithm \ref{Algn2}.
\end{document}